\newtheorem{theorem}{Theorem}
\newtheorem{lemma}[theorem]{Lemma}
\newtheorem{remark}[theorem]{Remark}
\begin{document}

\title{Avoiding order reduction when integrating diffusion-reaction boundary value problems with exponential splitting methods}
\author{{\sc I. Alonso-Mallo \thanks{Email: isaias@mac.uva.es}, B. Cano \thanks{Corresponding author. Email: bego@mac.uva.es} } \\ \small
IMUVA, Departamento de Matem\'atica Aplicada,\\ \small Facultad de
Ciencias, Universidad de
Valladolid,\\ \small Paseo de Bel\'en 7, 47011 Valladolid,\\ \small Spain \\
{\sc and}\\
{\sc N. Reguera}\thanks{Email: nreguera@ubu.es} \\
\small IMUVA, Departamento de Matem\'aticas y Computaci\'on, \\
\small  Escuela Polit\'ecnica Superior, Universidad de Burgos,\\
\small  Avda. Cantabria, 09006 Burgos, \\ \small  Spain }
\date{}

%
%

\maketitle

\begin{abstract}
In this paper, we suggest a technique to avoid order reduction in
time when integrating reaction-diffusion boundary
value problems under non-homogeneous boundary conditions with
exponential splitting methods. More precisely, we consider
Lie-Trotter and Strang splitting methods and Dirichlet, Neumann
and Robin boundary conditions. Beginning from an abstract
framework in Banach spaces, a thorough error analysis after full
discretization is performed and some numerical results are shown
which corroborate the theoretical results.
\end{abstract}



\section{Introduction}

Exponential splitting methods are very much used in the recent
literature when integrating partial differential equations because
they integrate the linear and stiff part of the problem in an
exact way \cite{HO2}. Due to the recent and high
development of Krylov-type methods to calculate exponential-type
functions over matrices which are applied over vectors
\cite{Grimm}, they constitute an effective tool to integrate such
problems in a stable way.

In this paper, we will center on the first-order Lie-Trotter and
second-order Strang methods. The order reduction which turns up
with these methods when integrating linear problems with
homogeneous boundary conditions was recently
studied in \cite{FOS}. In \cite{ACR} we have suggested a technique
to deal with non-homogeneous boundary conditions in linear
problems. That technique has some similarities to that suggested
in \cite{acr2} for other exponential-type methods, which are
Lawson ones. With that procedure, we  managed to avoid order
reduction completely in linear problems.

The aim of  the present paper is to generalize that technique to
nonlinear reaction-diffusion problems and to prove that order reduction can also be completely avoided.

There are other results in the literature concerning this problem
or a more specific one. For example, in \cite{CR}, a generalized
Strang method is suggested for the specific nonlinear
Schr\"odinger equation. However, in that paper, an abstract
formulation of the problem is not given (as it is here), Neumann
or Robin type boundary conditions are not considered,  parabolic problems for which a
summation-by-parts argument can be applied are not included and
finally, Lie-Trotter method is not analyzed. On the other hand, in
\cite{EO, EO2}, a completely different technique is suggested to avoid
order reduction with the same methods and nonlinear problems than
here, but the analysis for the local and global error is just
performed in time. The error coming from the space discretization and the numerical approximation in time of the nonlinear and smooth part
are not included  and therefore, a practical implementation of the
technique for the practitioners is not justified. However, in the
present paper, the exact formulas to be implemented are described
in (\ref{ffv})-(\ref{Uhn_1}) for Lie-Trotter and in (\ref{Vhnsf}),
(\ref{ffws}) and (\ref{Uhn_1s}) for Strang. Moreover, the analysis is performed under quite general assumptions on the space discretization and time integration of the nonlinear part. For that, we use the maximum norm, which facilitates its applicability to quite general problems.

The paper is structured as follows. Section 2 gives some
preliminaries on the abstract setting of the problem, on the
assumptions of regularity which are required for the solution to
be approximated and on Lie-Trotter and Strang methods. Section 3
describes the technique to avoid order reduction after time
integration with Lie-Trotter method and explains how to deal with
non-homogeneous Dirichlet, Robin and Neumann type boundary
conditions. Moreover, a thorough local error analysis is given. In
Section 4, the same is done for Strang method, for which just
order $2$ can be obtained in general for the local error. Section
5 states some hypotheses on the space discretization which include some
finite-difference schemes, as the ones being used in the numerical experiments. (Similarly, collocation-type methods could be considered.) The detailed
analysis of the local and global error after full discretization
is performed in Section 6 for Lie-Trotter splitting and in Section
7 for Strang splitting. For the latter, no order reduction in time
is observed for the global error if the bound (\ref{spp}) is
satisfied by the discretization of the elliptic problem. Finally,
Section 8 shows some numerical experiments which corroborate the
previous results. Moreover, in two dimensions a double splitting
is included considering the results in \cite{ACR}.

\section{Preliminaries}
Let $X$ and $Y$ be Banach spaces and let $A:D(A) \to X$ and
$\partial: X \to Y$ be linear operators. Our goal is to study full
discretizations, by using as time integrators Lie-Trotter and
Strang exponential methods, of the nonlinear abstract non homogeneous initial
boundary value problem
\begin{eqnarray}
\label{laibvp}
\begin{array}{rcl}
u'(t)&=&Au(t)+f(t,u(t)), \quad  0\le t \le T,\\
u(0)&=&u_0 \in X,\\
\partial u(t)&=&g(t)\in Y, \quad  0\le t \le T,
\end{array}
\end{eqnarray}
where the functions $f:[0,T]\times X \to X$ (in general nonlinear)
and $g: [0,T] \to Y$ are  regular enough.

The abstract setting (\ref{laibvp}) permits to cover a wide range
of nonlinear evolutionary problems governed by partial
differential equations. We use the following hypotheses, which are
closely related to the ones in \cite{hansenko}, where the Strang
splitting  applied to a similar abstract problem with homogeneous
boundary conditions is studied. In our case, we add suitable
hypotheses in a such way that we are able to consider non
homogeneous boundary values (cf. \cite{alonsomallop,palenciaa}).

\begin{enumerate}
\item[(A1)] The boundary operator $\partial:D(A)\subset X\to Y$ is
onto.

\item[(A2)] Ker($\partial$) is dense in $X$ and
$A_0:D(A_0)=\ker(\partial)\subset X \to X$, the restriction of $A$
to Ker($\partial$), is the infinitesimal generator of a $C_0$-
semigroup $\{e^{t A_0}\}_{t\ge 0}$ in $X$, which type $\omega$ is
assumed to be negative.

\item[(A3)] If $z \in \mathbb{C}$ satisfies $\Re (z) >0$ and $v
\in Y$, then the steady state problem
\begin{eqnarray}
 Ax &=& zx,\\
 \partial x&=&v,
\label{stationaryproblem}
\end{eqnarray}
possesses a unique solution denoted by $x=K(z)v$. Moreover, the
linear operator $K(z): Y \to D(A)$ satisfies
\begin{eqnarray}
\label{stationaryoperator} \| K(z)v\| \le  C\|v\|,
\end{eqnarray}
where the constant $C$ holds for any $z$ such that $Re (z) \ge
\omega_0 > \omega$.

\item[(A4)] The nonlinear source $f$ belongs to $C^1([0,T] \times
X, X)$.

\item[(A5)] The solution $u$ of (\ref{laibvp}) satisfies $u\in
C^2([0,T], X)$, $u(t) \in D(A^2)$ for all $t \in [0,T]$ and $Au,
A^2u \in C^{1}([0,T], X)$.

\item[(A6)] $f(t, u(t))\in D(A)$ for  all $t \in [0,T]$, and
$Af(\cdot, u(\cdot))\in C([0,T], X)$.
\end{enumerate}

In the remaining of the paper, we always suppose that (A1)-(A6)
are satisfied. However, we notice that we also assume more
regularity in certain results which apply for Strang method.

\begin{remark}
From (A4), we deduce that $f :D(f)\subset [0,T] \times X \to X$ is
locally Lipschitz continuous in $u$, uniformly in $t \in [0,T]$,
with respect to the norm in $X$, that is,
\begin{eqnarray}
\label{locallipschtz}
  \|f(t, v)-f(t,u)\| \le  L(c)\|v-u \|,
\end{eqnarray}
for $(t,u), (t, v) \in D(f)$ with $\|u\|, \|v\| \le c$.

In order to define the Lie-Trotter and Strang splitting methods,
we need to solve the nonlinear evolution equation
\begin{eqnarray}
\label{fivp}
\begin{array}{rcl}
v'(t)&=& f(\tau+t, v(t)),\\
v(0)&=&v_0,
\end{array}
\end{eqnarray}
for several initial values $v_0$ and times $\tau>0$.  From
(\ref{locallipschtz}), problem (\ref{fivp}) has a unique
solution, which is well defined for sufficiently small times (see
 Theorem 1.8.1 in \cite{cartan}).
\end{remark}

\begin{remark}
When problem (\ref{laibvp}) is linear, that is, when
$f(t,\cdot) \equiv h(t)$, the results in \cite{
alonsomallop,palenciaa} show that, with the hypotheses (A1)-(A3),
the problem
\begin{eqnarray}
\label{laibvp2}
\begin{array}{rcl}
u'(t)&=&Au(t)+h(t), \quad  0\le t \le T,\\
u(0)&=&u_0 \in X,\\
\partial u(t)&=&g(t)\in Y, \quad  0\le t \le T,
\end{array}
\end{eqnarray}
is well posed and the solution depends continuously on data $u_0,
h$, and $g$.

In order to define the time integrators which are used in this
paper, we will consider initial boundary value problems which can
be written as
\begin{eqnarray}
\label{auxprob} \left.\begin{array}{rcl}
u'(s)&=&A u(s),\\
u(0)&=&u_0,\\
\partial u(s)&=& v_0 +v_1s,
\end{array}\right.
\end{eqnarray}
where $u_0\in X$ and $v_0, v_1\in Y$.

Assuming that $u_0 \in D(A)$ and $\partial u_0 =v_0$,  the
solution of (\ref{auxprob}) is given by (see e.g. \cite{ACR})
\begin{eqnarray}
\label{auxsol} u(t)= e^{t A_0}\left(u_0-K(0)v_0\right) + K(0)(v_0
+v_1t) -\int_0^t e^{s A_0}K(0)v_1ds.
\end{eqnarray}

Notice that (\ref{auxsol}) is well defined for any $u_0 \in X$ and
$v_0, v_1\in Y$; therefore, it may be considered as a generalized
solution of (\ref{auxprob}) even when $\partial u_0 \not= v_0$ or
$u _0 \notin D(A)$. We will use this fact in order to establish
the time integrator method in the following section.

\end{remark}

\begin{remark}
From hypotheses (A1)-(A4),  problem (\ref{laibvp}) with
homogeneous boundary conditions has a unique classical solution
for small enough time intervals (see Theorem 6.1.5 in
\cite{pazy}).

Regarding the nonhomogeneous case, we can assume that the boundary
function $g:[0,T] \to Y$ satisfies $g \in C^1([0,T],Y)$ and we can
look for a solution of (\ref{laibvp}) given by:
\begin{eqnarray*}
u(t)=v(t)+K(z)g(t), \quad t \ge 0,
\end{eqnarray*}
for some fixed $\Re(z) >\omega$. Then, $v$ is solution of an IBVP
with vanishing boundary values similar to the one in \cite{pazy}
and the well-posedness  for the case of
nonhomogeneous boundary values is a direct consequence if we take
the abstract theory for initial boundary value problems in
\cite{alonsomallop, palenciaa} into account.

However, condition (A4) may be very strong. When  $X$ is a
function space with a norm  $L^p$, $ 1 \le p < +\infty$, and $f$
is an operator given by,
\begin{eqnarray}
u \to f(u)= \phi \circ u, \label{neminski}
\end{eqnarray}
with $\phi : \mathbb{C} \to \mathbb{C}$, (\ref{locallipschtz})
implies that $\phi$ is globally Lipschitz in $\mathbb{C}$. This
objection disappears by considering the supremum norm, which is
used in our numerical examples, where the nonlinear source is
given by
\begin{eqnarray}
\label{nonlinearterm} u \to f(t,u)= \phi \circ u + h(t),
\end{eqnarray}
with $h:[0,T] \to X$, that is, $f$ is the sum of an
operator like (\ref{neminski}) and a linear term. In this way, problem (\ref{laibvp}) is
well posed.
\end{remark}

\begin{remark}
If we suppose that $A_0$ is the infinitesimal generator of an
analytic semigroup, we can consider, for $\theta \in (0,1)$ a new
norm given by $\| u\|_\theta=\| (\omega I-A_0)^\theta u\|$,
$\omega >0$, when $u \in X_\theta=D((\omega I-A_0)^\theta)$. In
this case, if $f$ satisfies a local Lipschitz condition in $u$
with this new norm, it is possible to obtain the well posedness of problem (\ref{laibvp}) even when $X$ is a function space with
a norm $L^p$, $ 1 \le p < +\infty$ (see \cite{henry}). However,
this approach is not enough for our purposes since we also need to
solve the nonlinear evolution equation (\ref{fivp}).
\end{remark}

{\bf Example.}
 {\it Let  $\Omega \subset \mathbb{R}^n$ be open and
bounded with Lipschitz boundary. Then, there exists a unique
solution $h \in C(\overline{\Omega})$ of the problem
\begin{eqnarray}
\label{estproblem}
\begin{array}{rcl}
\Delta h&=& 0\,\,\mathrm{in} \,D(\Omega)', \\
h|_{\partial \Omega}&=&\varphi \in C(\partial \Omega),
\end{array}
\end{eqnarray}
where $D(\Omega)'$ denotes the space of distributions. We also
remark that it can be proved that $h \in C^\infty(\Omega)$.

We take $X=C(\overline{\Omega})$  with the supremum norm and we
consider the operator $A_0$ defined on $X$ by
\begin{eqnarray*}
D(A_0)&=&\{ u\in C_0(\Omega): \Delta u \in X\}\\
A_0u &=& \Delta u \in D(\Omega)',
\end{eqnarray*}
where $C_0(\Omega)=\{ u \in X :  u|_{\partial \Omega} =0\}$. Then,
the operator $A_0$ generates a bounded holomorphic semigroup
$e^{tA_0}$ on X (\cite{arendt}, Section 2.4). We denote $\omega
<0$ the type of this semigroup.

Now, we take $Y=C(\partial \Omega)$ and we define the linear
operator
\begin{eqnarray*}
K: Y &\to& K(Y) \subset X\\
\varphi &\to& K(\varphi)=h,
\end{eqnarray*}
where $h$ is the solution of (\ref{estproblem}). Then, we can
define the (dense) subspace
\begin{eqnarray*}
D(A)=  D(A_0)\oplus K(Y),
\end{eqnarray*}
the extension of the operator $A_0$,
\begin{eqnarray*}
A: D(A)\subset X &\to& X
\end{eqnarray*}
by means of $Au=\Delta u$  for each $u \in D(A)$, and the boundary
operator
\begin{eqnarray*}
\partial : D(A) \subset X &\to& Y,\\
 u &\to& \partial u=u|_{\partial \Omega}.
\end{eqnarray*}

Finally, if $ z \in \mathbb{C}$ satisfies $ \Re (z) > \omega$, we
define
\begin{eqnarray*}
K(z)= (-A_0) (z-A_0)^{-1}K= K -z(z-A_0)^{-1}K,
\end{eqnarray*}
which satisfies $AK(z)= AK -zA_0(z-A_0)^{-1}K=
-zA_0(z-A_0)^{-1}K=zK(z)$.

Therefore, hypotheses (A1),(A2), and (A3) are satisfied.

We remark that the restriction to $A=\Delta$ is only made for
simplicity of presentation and more general elliptic operators can
be considered (see \cite{arendt2}).
}

Because of hypothesis (A2), $\{\varphi_j(t A_0)\}_{j=1}^{3}$   are
bounded operators for $t>0$, where $\{\varphi_j\}$ are the
standard functions which are used in exponential methods \cite{HO2} and which
are defined by
\begin{eqnarray}
\varphi_j( t A_0)=\frac{1}{t^j} \int_0^t
e^{(t-\tau)A_0}\frac{\tau^{j-1}}{(j-1)!}d\tau, \quad j \ge 1.
\label{varphi}
\end{eqnarray}
It is well-known that they can be calculated in a recursive way through the formulas
\begin{eqnarray}
\varphi_{j+1}(z)=\frac{\varphi_j(z)-1/j!}{z}, \quad z \neq 0,
\qquad \varphi_{j+1}(0)=\frac{1}{(j+1)!}, \qquad \varphi_0(z)=e^z.
\label{recurf}
\end{eqnarray}

For the time integration, we will center on exponential
Lie-Trotter and Strang methods which, applied to a
finite-dimensional nonlinear problem like
\begin{eqnarray}
U'(t) = M U(t)+F(t,U(t)), \label{linfd}
\end{eqnarray}
where $M$ is a matrix, are described by the following formulas at
each step
\begin{eqnarray}
U_{n+1}&=&\Psi_{k}^{F,t_n}( e^{ k M} U_n), \label{lie} \\
U_{n+1}&=&\Psi_{\frac{k}{2}}^{F,t_n+\frac{k}{2}} \big( e^{ k M}
\Psi_{\frac{k}{2}}^{F,t_n}(U_n) \big) , \label{strang}
\end{eqnarray}
where $k >0$ is the time stepsize and $\Psi_{k}^{F,t_n}(U)$ and
$\Psi_{k}^{F,t_n+\frac{k}{2}}(U)$ are the results of applying a
certain $p$th-order numerical method ($p\ge 1$) to the following
nonlinear differential problems:
\begin{eqnarray}
&U'(s)=F(t_n+s,U(s)), \qquad  & U'(s)=F(t_n+\frac{k}{2}+s,U(s)),
\nonumber
\end{eqnarray}
with initial condition $U(0)=U$ and $t_n=nk$ for $n \ge 0$.

\section{Time semidiscretization: exponential Lie-Trotter splitting}

In this section, we give the technique to generalize
Lie-Trotter exponential method, so that time order reduction is avoided
even with non-vanishing and time-dependent boundary conditions.
Besides, we prove the full-order of the local error of the time
semidiscretization.

\subsection{Description of the technique}

Whenever $M$ is a matrix, $e^{sM} V$ is the solution at $t=s$ of
\begin{eqnarray}
\label{odesystem}
\begin{array}{rcl}
\dot{U}(t)&=&M U(t),  \\
U(0)&=& V.
\end{array}
\end{eqnarray}
More generally, matrix $M$ can be substituted by the
infinitesimal generator $A_0$ of a $C_0$-semigroup in a
certain Banach space $X$. Then, the corresponding semigroup is
denoted by $e^{s A_0}$ and $e^{sA_0} v$, for $v\in D(A_0)\subset
X$, is the solution of the corresponding abstract differential
problem
\begin{eqnarray}
\dot{u}(t)&=&A_0 u(t), \nonumber \\
u(0)&=& v. \nonumber
\end{eqnarray}

When  $A_0$ is a linear (unbounded) operator associated to a
differential operator defined on $\Omega \subset \mathbb{R}^n$,
its domain $D(A_0)$ is formed by functions for which certain
boundary operator vanishes on the boundary of $\Omega$ (see
Example in Section 2).

Since we are interested in problems with nonvanishing boundary
conditions, as those in (\ref{laibvp}), we replace the exponential
matrices or semigroups with the solution of differential problems
where the boundary values must be specified in a clever way. More
precisely, we suggest to advance a stepsize from $u_n$ in the
following way. Firstly, we consider the solution of
\begin{eqnarray}
\label{vn}
 \left.\begin{array}{rcl}
v_{n}'(s)&=&A v_{n}(s),\\
v_{n}(0)&=&u_n,\\
\partial v_{n}(s)&=& \partial \hat{v}_{n}(s),
\end{array}\right.
\end{eqnarray}
where
\begin{eqnarray}
\hat{v}_{n}(s)=u(t_n)+s  A u(t_n). \label{vh}
\end{eqnarray}
Then, we consider the problem
\begin{eqnarray}
\label{wnk}
 \left.\begin{array}{rcl}
w_{n}'(s)&=&f(t_n+s,w_n(s)),\\
w_{n}(0)&=&v_n(k),
\end{array}\right.
\end{eqnarray}
and $u^{n+1}$ is obtained advancing a time step $k
\ge 0$ by means of a numerical integrator of order $p \ge 1$. That
is,
\begin{eqnarray}
u^{n+1}=\Psi_{k}^{f,t_n}(v_n(k)). \label{deflt}
\end{eqnarray}

Notice that we could have also started by integrating the
nonlinear part of the equation and then the linear and stiff one.
However, that would have led to a slightly more complicated
expression for the boundary in the linear part.

\begin{remark}
\label{r1} In order to calculate $\partial \hat{v}_n(s)$, apart
from $\partial u(t_n)=g(t_n)$, we also need $\partial A u(t_n)$,
for which we can use from (\ref{laibvp}),
$$
\partial A u(t_n)= \partial u'(t_n)-\partial f(t_n,u(t_n))=g'(t_n)-\partial f(t_n,u(t_n)).
$$
When the operator $\partial$ corresponds to a Dirichlet boundary
condition and the nonlinear term is given by
(\ref{nonlinearterm}),
$$\partial f(t_n,u(t_n))=\phi\circ g(t_n) + \partial h(t_n),$$
and $\partial \hat{v}_n(s)$ is exactly calculated from the given
data. However, when $\partial$ corresponds to a Robin or Neumann
boundary condition, $\partial A u(t_n)$ can only be calculated in
an approximated way. For that, we write the boundary condition as
\begin{eqnarray}
 \partial u= \alpha u |_{\partial \Omega}+\beta \partial_n u |_{\partial \Omega}=g, \quad \beta \neq 0,
 \label{nrbc}
\end{eqnarray}
with $\partial \Omega$ the boundary (or some part of it) of some
domain $\Omega$ and $\partial_n$ the normal derivative to that
boundary. Then,  when $f$ is again like in (\ref{nonlinearterm}),
it can be used that
\begin{eqnarray}
\partial f(t_n,u(t_n))= \alpha[\phi(u(t_n)|_{\partial \Omega})+h(t_n)|_{\partial \Omega}]
+\beta [ \phi'(u(t_n)|_{\partial \Omega})\partial_n
u(t_n)|_{\partial \Omega}+\partial_n h(t_n)|_{\partial \Omega}].
\nonumber
\end{eqnarray}
In this expression, $u(t_n)|_{\partial \Omega}$ can be substituted
by the numerical approximation at the previous step and
$\partial_n u(t_n)|_{\partial \Omega}$ by the result of applying
the following formula which comes from (\ref{nrbc})
$$ \partial_n u |_{\partial \Omega}=\frac{g(t_n)-\alpha u(t_n)|_{\partial \Omega}}{\beta}.$$
\end{remark}

\subsection{Local error of the time semidiscretization} In order
to study the local error, we consider the value which is obtained in
(\ref{deflt}) starting from $u(t_n)$ in (\ref{vn}). Then, we
obtain
\begin{eqnarray*}
\overline{u}_{n+1}=\Psi_{k}^{f,t_n}(\overline{v}_n(k)),
\end{eqnarray*}
where $\overline{v}_{n}(s)$ is the solution of
\begin{eqnarray}
\left.\begin{array}{rcl}
\overline{v}_{n}'(s)&=&A \overline{v}_{n}(s),\\
\overline{v}_{n}(0)&=&u(t_n),\\
\partial \overline{v}_{n}(s)&=&  \partial \hat{v}_n(s).
\end{array}\right.
\label{ovn}
\end{eqnarray}
with $\hat{v}_n(s)$ that in (\ref{vh}).

Before bounding the local error
$\rho_{n+1}=\bar{u}_{n+1}-u(t_{n+1})$, let us first study more
thoroughly  $\overline{v}_{n}(s)$.
\begin{lemma} The
solution of (\ref{ovn}) is given by
\begin{eqnarray}
\overline{v}_{n}(s) &=& u(t_n)+s A u(t_n)+  s^2 \varphi_2(s A_0)
A^2 u(t_n). \nonumber
\end{eqnarray}
where  $\varphi_{2}(z)$ is defined in (\ref{varphi}).
\label{teor1}
\end{lemma}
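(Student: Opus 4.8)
The plan is to recognize problem (\ref{ovn}) as a particular instance of the auxiliary boundary value problem (\ref{auxprob}) and then to reduce the explicit representation (\ref{auxsol}) to the stated closed form. Writing the boundary data from (\ref{vh}) as $\partial\hat v_n(s)=\partial u(t_n)+s\,\partial Au(t_n)$, we see that (\ref{ovn}) is exactly (\ref{auxprob}) with $u_0=u(t_n)$, $v_0=\partial u(t_n)$ and $v_1=\partial Au(t_n)$. The compatibility assumptions needed for (\ref{auxsol}) hold: by (A5), $u(t_n)\in D(A^2)\subset D(A)$, so that $Au(t_n)$ and $A^2u(t_n)$ are well defined, and $\partial u_0=v_0$ by construction. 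Substituting into (\ref{auxsol}) gives
\begin{eqnarray*}
\overline v_n(s) &=& e^{sA_0}\bigl(u(t_n)-K(0)\partial u(t_n)\bigr)+K(0)\bigl(\partial u(t_n)+s\,\partial Au(t_n)\bigr)\\
& & {}-\int_0^s e^{rA_0}K(0)\partial Au(t_n)\,dr,
\end{eqnarray*}
which is the expression I would then simplify.

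The key identities come from the steady state operator $K(0)$: since $K(0)w$ solves $Ax=0$, $\partial x=w$ by (A3), we have $AK(0)w=0$ and $\partial K(0)w=w$. Hence $a:=u(t_n)-K(0)\partial u(t_n)$ lies in $\ker\partial=D(A_0)$ with $A_0a=Au(t_n)$, and likewise $c:=Au(t_n)-K(0)\partial Au(t_n)\in D(A_0)$ with $A_0c=A^2u(t_n)$ (here $u(t_n)\in D(A^2)$ is used). Writing $K(0)\partial u(t_n)=u(t_n)-a$ and invoking the semigroup identity $(e^{sA_0}-I)a=\int_0^s e^{rA_0}A_0a\,dr$, valid for $a\in D(A_0)$, the first two terms of $\overline v_n(s)$ collapse to $u(t_n)+\int_0^s e^{rA_0}Au(t_n)\,dr$; grouping this with the last two terms and setting $b:=K(0)\partial Au(t_n)$ yields
\begin{eqnarray*}
\overline v_n(s) &=& u(t_n)+\int_0^s e^{rA_0}c\,dr+s\,b.
\end{eqnarray*}
Expanding $e^{rA_0}c=c+\int_0^r e^{\sigma A_0}A^2u(t_n)\,d\sigma$, integrating over $r$ and interchanging the order of integration by Fubini turns the double integral into $\int_0^s(s-\sigma)e^{\sigma A_0}A^2u(t_n)\,d\sigma$; the substitution $\tau=s-\sigma$ together with the definition (\ref{varphi}) of $\varphi_2$ identifies it with $s^2\varphi_2(sA_0)A^2u(t_n)$, while the first order terms combine as $s\,c+s\,b=s\,Au(t_n)$. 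This produces the claimed formula.

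The main obstacle is bookkeeping rather than conceptual: one must track domains carefully, noting that $K(0)\partial Au(t_n)$ is itself \emph{not} in $D(A_0)$ (its boundary trace need not vanish), so the semigroup manipulations must be applied to the combinations $a$ and $c$, for which the traces cancel and the identity $A=A_0$ on $\ker\partial$ becomes available. As a consistency check I would verify the final expression directly against (\ref{ovn}): it has value $u(t_n)$ at $s=0$; its boundary trace equals $\partial u(t_n)+s\,\partial Au(t_n)$ because $s^2\varphi_2(sA_0)A^2u(t_n)=\int_0^s e^{(s-\tau)A_0}\tau\,d\tau\,A^2u(t_n)\in D(A_0)=\ker\partial$; and differentiating, with $\frac{d}{ds}\bigl[s^2\varphi_2(sA_0)\psi\bigr]=s\psi+A_0\,s^2\varphi_2(sA_0)\psi$, shows that $\overline v_n'(s)=A\overline v_n(s)$. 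Uniqueness for (\ref{ovn}), guaranteed by the linear well-posedness theory underlying (\ref{auxsol}), then closes the argument.
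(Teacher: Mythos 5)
Your proof is correct, but it takes a genuinely different route from the paper's. The paper's own argument is a two-step difference trick: it subtracts the affine function $\hat v_n(s)=u(t_n)+sAu(t_n)$ of (\ref{vh}) from $\overline v_n(s)$, observes that the difference solves a linear evolution problem with zero initial datum, homogeneous boundary condition $\partial(\overline v_n(s)-\hat v_n(s))=0$ and source $sA^2u(t_n)$, and then applies the variation-of-constants formula with the semigroup $e^{tA_0}$, recognizing $\int_0^s e^{(s-\tau)A_0}\tau A^2u(t_n)\,d\tau=s^2\varphi_2(sA_0)A^2u(t_n)$ from (\ref{varphi}). You instead instantiate the general representation formula (\ref{auxsol}) for the auxiliary problem (\ref{auxprob}), with $u_0=u(t_n)$, $v_0=\partial u(t_n)$, $v_1=\partial Au(t_n)$, and reduce it algebraically using $AK(0)=0$, $\partial K(0)w=w$, and the semigroup identity $(e^{sA_0}-I)x=\int_0^s e^{rA_0}A_0x\,dr$ applied only to the trace-free combinations $a$ and $c$ (your care on this domain point is exactly right, since $K(0)\partial Au(t_n)\notin D(A_0)$ in general). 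Both arguments rest on the same ingredients, namely (A3), (A5) and Duhamel-type semigroup facts; the paper's version is shorter and its pattern is reused essentially unchanged to prove Lemma \ref{regstrang} for the Strang splitting, while your version buys an explicit accounting of where the compatibility condition $\partial u_0=v_0$ enters, reuses a formula the paper already states in Remark 2, and closes with an independent verification of the closed form against (\ref{ovn}) plus uniqueness, which the paper leaves implicit.
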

\begin{proof}
Notice that
\begin{eqnarray}
 \left.\begin{array}{rcl}
\overline{v}_{n}'(s)-\hat{v}_n'(s)&=&A \overline{v}_{n}(s)-A
u(t_n)=
A(\overline{v}_{n}(s)-\hat{v}_n(s))+A(\hat{v}_n(s)-u(t_n)) \nonumber \\
&=& A(\overline{v}_{n}(s)-\hat{v}_n(s))+ s A^2 u(t_n), \nonumber \\
\overline{v}_{n}(0)-\hat{v}_{n}(0)&=&0,\\
\partial(\overline{v}_{n}(s)-\hat{v}_{n}(s))&=&0.
\end{array}\right.
\end{eqnarray}
Then,
\begin{eqnarray}
\overline{v}_{n}(s)&=&\hat{v}_{n}(s)+ \int_0^s e^{(s-\tau)A_0}\tau
A^2 u(t_n) d \tau =u(t_n)+ s A u(t_n)+ s^2 \varphi_2(s A_0) A^2
u(t_n). \nonumber
\end{eqnarray}
\end{proof}

\begin{theorem}
\label{theolieerrorlocalsd}  Let us assume hypotheses
(A1)-(A6) and that the numerical integrator $\Psi_k$ integrates
(\ref{wnk}) with order $p\ge 1$ in $X$. Then, when integrating
(\ref{laibvp}) with Lie-Trotter method using the technique
(\ref{vn})-(\ref{deflt}), the local error satisfies
\begin{eqnarray*}
\rho_{n+1}\equiv \overline{u}_{n+1}-u(t_{n+1})=O(k^2).
\end{eqnarray*}
\end{theorem}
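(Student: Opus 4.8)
The plan is to show that the local error decomposes into two pieces—the error coming from the exact flow $\overline{v}_n(k)$ of the stiff linear part compared with $u(t_{n+1})$, and the error introduced by the order-$p$ integrator $\Psi_k^{f,t_n}$ applied to the smooth nonlinear part—and to bound each piece separately by $O(k^2)$. First I would write $u(t_{n+1})$ via the variation-of-constants style decomposition that mirrors the splitting: advance $u(t_n)$ exactly under the linear part with the correct boundary data to reach some intermediate value $\overline{v}_n(k)$, then account for the nonlinear contribution over $[t_n,t_{n+1}]$. The natural splitting point is the exact solution $z(s)$ of the nonlinear IVP $z'(s)=f(t_n+s,z(s))$, $z(0)=\overline{v}_n(k)$, so that $\overline{u}_{n+1}=\Psi_k^{f,t_n}(\overline{v}_n(k))$ approximates $z(k)$.

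The key steps, in order, are as follows. First I would use Lemma~\ref{teor1} to get the exact expansion $\overline{v}_n(s)=u(t_n)+sAu(t_n)+s^2\varphi_2(sA_0)A^2u(t_n)$; by (A5) the term $A^2u(t_n)$ is well-defined and, since $\varphi_2(sA_0)$ is a bounded operator uniformly in $s\in[0,k]$ by hypothesis (A2), this shows $\overline{v}_n(k)=u(t_n)+kAu(t_n)+O(k^2)$. Second, I would Taylor-expand the exact solution itself: from (A5) we have $u(t_{n+1})=u(t_n)+ku'(t_n)+O(k^2)=u(t_n)+k\left(Au(t_n)+f(t_n,u(t_n))\right)+O(k^2)$, where the $O(k^2)$ remainder is controlled because $u\in C^2([0,T],X)$. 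Third, I would compare $\overline{v}_n(k)$ with the value obtained by freezing the nonlinear term: subtracting the two expansions gives $\overline{v}_n(k)-u(t_{n+1})=-k\,f(t_n,u(t_n))+O(k^2)$, which is exactly the contribution that the nonlinear subflow must supply. Fourth, I would invoke the order-$p$ property of $\Psi_k$ together with the local Lipschitz bound (\ref{locallipschtz}) from (A4): since $\Psi_k^{f,t_n}(\overline{v}_n(k))$ reproduces the true nonlinear flow $z(k)=\overline{v}_n(k)+k\,f(t_n,\overline{v}_n(k))+O(k^2)$ up to an error $O(k^{p+1})=O(k^2)$, and since $f(t_n,\overline{v}_n(k))=f(t_n,u(t_n))+O(k)$ by Lipschitz continuity, the nonlinear advance contributes precisely $+k\,f(t_n,u(t_n))+O(k^2)$. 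Adding this to the third step cancels the leading defect and leaves $\overline{u}_{n+1}-u(t_{n+1})=O(k^2)$.

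The main obstacle I anticipate is bookkeeping the constants in the $O(k^2)$ terms so that they depend only on the regularity norms guaranteed by (A5)--(A6) and on the Lipschitz constant $L(c)$ of (\ref{locallipschtz}), uniformly in $n$; in particular one must ensure that $\|\overline{v}_n(k)\|$ stays bounded so that the local Lipschitz estimate applies with a fixed $c$, and that the boundedness of $\varphi_2(sA_0)$ is used correctly when absorbing the $s^2$ term. A secondary subtlety is the interplay with the boundary: the whole construction (\ref{vn})--(\ref{vh}) was designed precisely so that $\overline{v}_n(s)$ solves the linear problem with boundary data matching $u(t_n)+sAu(t_n)=\partial\hat v_n(s)$ to first order, which is what prevents the usual order reduction; I would emphasize that it is exactly this choice that makes the defect $\overline{v}_n(k)-u(t_{n+1})$ of size $O(k)$ in the \emph{interior} sense (rather than the degraded order one would see with naive homogeneous or frozen boundary data), so that the single nonlinear step suffices to reach $O(k^2)$.
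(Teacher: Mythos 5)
Your proposal is correct and follows essentially the same route as the paper: both compare $\overline{u}_{n+1}$ with the exact nonlinear flow started at $\overline{v}_n(k)$ (your $z$, the paper's $\overline{w}_n$), expand $\overline{v}_n(k)$ via Lemma \ref{teor1}, use Taylor expansion of $u(t_{n+1})$ together with the local Lipschitz property of $f$ to cancel the $k\,f(t_n,u(t_n))$ defect, and absorb the integrator error as $O(k^{p+1})$. The only difference is presentational: you make explicit the uniform boundedness of $\varphi_2(sA_0)$ and the Lipschitz swap $f(t_n,\overline{v}_n(k))=f(t_n,u(t_n))+O(k)$, which the paper leaves implicit.
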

\begin{proof}
Denoting by $\overline{w}_n(s)$ the solution of
\begin{eqnarray}
\overline{w}_n'(s)&=&f(t_n+s, \overline{w}_n(s)) \nonumber \\
\overline{w}_n(0)&=&\overline{v}_n(k), \nonumber
\end{eqnarray}
it happens that
\begin{eqnarray}
\overline{w}_n(k)&=&\overline{v}_n(k)+k f(t_n,\overline{v}_n(k))+O(k^2) \nonumber \\
&=& u(t_n)+ k A u(t_n) +k f(t_n,u(t_n))+O(k^2)=u(t_{n+1})+O(k^2).
\nonumber
\end{eqnarray}
Then,
\begin{eqnarray}
\rho_{n+1}&=&\bar{u}_{n+1}-u(t_{n+1})=\Psi_{k}^{f,t_n}(\overline{v}_n(k))-u(t_{n+1}) \nonumber \\
&=&[\Psi_{k}^{f,t_n}(\overline{v}_n(k))-\overline{w}_n(k)]+[\overline{w}_n(k)-u(t_{n+1})]=O(k^{p+1})+O(k^2)=O(k^2).
 \nonumber
\end{eqnarray}

\end{proof}

\section{Time semidiscretization: exponential Strang splitting }
With the same idea as in Section 3, we describe now how to
generalize Strang exponential method in order to fight against
order reduction in time. Instead of achieving order $3$ for the
local error (as when integrating non-stiff ODEs), we will just
achieve order $2$ for it. This is due to the fact that we want to
guarantee that the boundary of the intermediate evolutionary
partial differential equation problem can be calculated in terms
of data. However, as we will see in Sections \ref{globalstrang}
and \ref{experimentos}, that will mean in practice no order
reduction for the global error because of a summation-by-parts
argument.

Notice that, instead of starting with the integration of the
linear part, as with Lie-Trotter method, we start with that of the
nonlinear and smooth one. This is because, in such a way, just one
stiff differential evolutionary problem per step arises for which
we must suggest a boundary.

\subsection{Description of the technique} For the time
integration of (\ref{laibvp}), we firstly consider the problem
\begin{eqnarray}
\label{strangsd-01}
\begin{array}{rcl}
v_{n}'(s)&=&f(t_n+s,v_n(s)),\\
v_{n}(0)&=&u_n,
\end{array}
\end{eqnarray}
and denote by $\Psi_{\frac{k}{2}}^{f,t_n}(u_n)$ the numerical
approximation of this problem after time $k/2$. Secondly, we
consider
\begin{eqnarray}
\label{strangsd-02}
\begin{array}{rcl}
w_{n}'(s)&=&A w_{n}(s),\\
w_{n}(0)&=&\Psi_{\frac{k}{2}}^{f,t_n}(u_n),\\
\partial w_{n}(s)&=&\partial \widehat{w}_{n}(s),\\
\end{array}
\end{eqnarray}
where
\begin{eqnarray}
\label{wnsombrero}
\widehat{w}_{n}(s)=u(t_n)+\frac{k}{2}f(t_n,u(t_n))+s A u(t_n),
\end{eqnarray}
which comes from approximating $v_n(\frac{k}{2})+ s A
v_n(\frac{k}{2})$. Thirdly, by considering
\begin{eqnarray}
\label{strangsd-03}
\begin{array}{rcl}
z_{n}'(s)&=&f(t_n+\frac{k}{2}+s,z_n(s)),\\
z_{n}(0)&=&w_n(k),
\end{array}
\end{eqnarray}
and advancing $k/2$ with the numerical integrator, we obtain
\begin{eqnarray}
\label{strangsd-final}
u_{n+1}=\Psi_{\frac{k}{2}}^{t_n+\frac{k}{2}}(w_n(k)).
\end{eqnarray}

\begin{remark}
\label{r2} Notice that the boundary values in (\ref{strangsd-02})
can be exactly or approximately calculated in terms of data
under the same considerations of Remark \ref{r1}.
\end{remark}
\subsection{Local error of the time semidiscretization}
\label{letss} In order to study the local error, we consider the
value $\overline{u}_{n+1}$ which is obtained in
(\ref{strangsd-final}) starting from $u_n=u(t_n)$ in
(\ref{strangsd-01}). Then, denoting by $\overline{w}_n(s)$ the
solution of (\ref{strangsd-02}) starting from
$\Psi_{\frac{k}{2}}^{f,t_n}(u(t_n))$, we have the following
result:
\begin{lemma}
\begin{eqnarray}
\overline{w}_n(s)&=&u(t_n)+\frac{k}{2} f(t_n, u(t_n))+s A
u(t_n)+e^{s A_0}
\big(\Psi_{\frac{k}{2}}^{f,t_n}(u(t_n))-u(t_n)-\frac{k}{2} f(t_n,u(t_n)) \big) \nonumber \\
&&+\frac{k}{2}s \varphi_1(s A_0) A f(t_n,u(t_n))+s^2 \varphi_2(s
A_0) A^2 u(t_n). \nonumber
\end{eqnarray}
\label{regstrang}
\end{lemma}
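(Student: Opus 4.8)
The plan is to solve the linear inhomogeneous boundary value problem (\ref{strangsd-02}) explicitly, exactly as was done for Lemma \ref{teor1}, by subtracting off the affine-in-$s$ lifting function $\widehat{w}_n(s)$ in (\ref{wnsombrero}) so that the difference satisfies a problem with \emph{vanishing} boundary data and hence can be represented purely through the semigroup $e^{sA_0}$ and the $\varphi_j$ functions. Concretely, I would set $\delta(s)=\overline{w}_n(s)-\widehat{w}_n(s)$ and compute that $\delta$ solves an abstract problem $\delta'(s)=A_0\delta(s)+r(s)$ with $\delta(0)=\Psi_{k/2}^{f,t_n}(u(t_n))-\widehat{w}_n(0)$ and $\partial\delta(s)=0$, where the forcing term $r(s)=A\widehat{w}_n(s)-\widehat{w}_n'(s)$ is an affine function of $s$. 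Since $\widehat{w}_n(s)=u(t_n)+\tfrac{k}{2}f(t_n,u(t_n))+sAu(t_n)$, a direct computation gives $\widehat{w}_n'(s)=Au(t_n)$ and $A\widehat{w}_n(s)=Au(t_n)+\tfrac{k}{2}Af(t_n,u(t_n))+sA^2u(t_n)$, so that the residual is
\begin{eqnarray*}
r(s)=\frac{k}{2}A f(t_n,u(t_n))+s A^2 u(t_n),
\end{eqnarray*}
a constant term plus a linear-in-$s$ term.

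Next I would apply the variation-of-constants (Duhamel) formula for the semigroup $\{e^{sA_0}\}$, exactly as in the proof of Lemma \ref{teor1}, writing
\begin{eqnarray*}
\overline{w}_n(s)=\widehat{w}_n(s)+e^{sA_0}\delta(0)+\int_0^s e^{(s-\tau)A_0} r(\tau)\,d\tau.
\end{eqnarray*}
The homogeneous piece $e^{sA_0}\delta(0)$ reproduces the semigroup term $e^{sA_0}\big(\Psi_{k/2}^{f,t_n}(u(t_n))-u(t_n)-\tfrac{k}{2}f(t_n,u(t_n))\big)$ in the statement, since $\delta(0)=\Psi_{k/2}^{f,t_n}(u(t_n))-u(t_n)-\tfrac{k}{2}f(t_n,u(t_n))$. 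The two pieces of the Duhamel integral are then evaluated against the definition (\ref{varphi}): the constant forcing $\tfrac{k}{2}Af(t_n,u(t_n))$ integrates to $\tfrac{k}{2}s\,\varphi_1(sA_0)Af(t_n,u(t_n))$, and the linear forcing $\tau A^2u(t_n)$ integrates to $s^2\varphi_2(sA_0)A^2u(t_n)$, using $\int_0^s e^{(s-\tau)A_0}\tfrac{\tau^{j-1}}{(j-1)!}\,d\tau = s^j\varphi_j(sA_0)$ for $j=1,2$. Adding the explicit $\widehat{w}_n(s)$ back recovers the first three (affine) terms $u(t_n)+\tfrac{k}{2}f(t_n,u(t_n))+sAu(t_n)$, completing the claimed identity.

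The only genuine technical point, rather than a difficulty, is justifying that these manipulations are legitimate at the level of the abstract problem: one must confirm that $\delta(0)\in X$ suffices for (\ref{auxsol})-style representations to hold and that the forcing $r(s)$ lands in the domains needed for the $\varphi_j(sA_0)$ operators to act, which is guaranteed by the regularity hypotheses (A5) ($u(t)\in D(A^2)$, ensuring $A^2u(t_n)$ is meaningful) and (A6) ($Af(\cdot,u(\cdot))$ well defined), together with the boundedness of $\varphi_j(sA_0)$ for $s>0$ recorded after (A6). Because the boundary data of $\delta$ vanishes identically, no $K(z)$-lifting terms survive and the homogeneous abstract theory applies cleanly; this is precisely why the intermediate boundary $\partial\widehat{w}_n(s)$ was chosen affine in $s$ in (\ref{wnsombrero}). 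Thus the proof is essentially a careful bookkeeping exercise paralleling Lemma \ref{teor1}, with the extra $\tfrac{k}{2}f(t_n,u(t_n))$ term carried through from the Strang half-step, and I would expect no substantive obstacle beyond verifying these domain conditions.
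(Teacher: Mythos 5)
Your proposal is correct and follows essentially the same route as the paper: subtract the affine lifting $\widehat{w}_n(s)$, note that the difference satisfies a problem with homogeneous boundary data, forcing $\frac{k}{2}Af(t_n,u(t_n))+sA^2u(t_n)$ and initial value $\Psi_{\frac{k}{2}}^{f,t_n}(u(t_n))-u(t_n)-\frac{k}{2}f(t_n,u(t_n))$, then apply the variation-of-constants formula with $e^{sA_0}$ and identify the two integrals as the $\varphi_1$ and $\varphi_2$ terms via (\ref{varphi}). The domain/regularity remarks you add are sound extra bookkeeping that the paper leaves implicit.
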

\begin{proof}
It can be noticed that
\begin{eqnarray}
\overline{w}_n'(s)-\hat{w}_n'(s)&=&A \overline{w}_n(s)-A u(t_n)= A(\overline{w}_n(s)-\hat{w}_n(s))+A \hat{w}_n(s)-A u(t_n) \nonumber \\
&=&A
(\overline{w}_n(s)-\hat{w}_n(s))+\frac{k}{2} A f(t_n,u(t_n))+ s A^2 u(t_n), \nonumber \\
\overline{w}_n(0)-\hat{w}_n(0)&=& \Psi_{\frac{k}{2}}^{f,t_n}(u(t_n))-u(t_n)-\frac{k}{2} f(t_n,u(t_n)), \nonumber \\
\partial(\overline{w}_n(s)-\hat{w}_n(s))&=&0.\nonumber
\end{eqnarray}
Then,
\begin{eqnarray}
\overline{w}_n(s)-\hat{w}_n(s)&=&e^{s A_0}(\overline{w}_n(0)-\hat{w}_n(0)) +\int_0^s e^{(s-\tau)A_0}[ \frac{k}{2} A f(t_n,u(t_n))+\tau A^2 u(t_n)] d\tau \nonumber \\
&=&e^{s A_0} \bigg( \Psi_{\frac{k}{2}}^{f,t_n}(u(t_n))-u(t_n)-\frac{k}{2} f(t_n,u(t_n))\bigg)\nonumber \\
&&+\frac{k}{2} s \varphi_1(s A_0) A f(t_n,u(t_n))+s^2 \varphi_2(s
A_0) A^2 u(t_n). \nonumber
\end{eqnarray}
\end{proof}
\begin{theorem}
Let us assume that hypotheses (A1)-(A6) are satisfied,
and that the numerical integrator $\Psi_k$ integrates
(\ref{strangsd-01}) and (\ref{strangsd-03}) with order $p\ge 1$ in
$X$. Then, when integrating (\ref{laibvp}) with Strang method
using the technique (\ref{strangsd-01})-(\ref{strangsd-03}), the
local error satisfies
\begin{eqnarray*}
\rho_{n+1}\equiv \overline{u}_{n+1}-u(t_{n+1})=O(k^2).
\end{eqnarray*}
\label{t4}
\end{theorem}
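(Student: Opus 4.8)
The plan is to follow exactly the same structure as the proof of Theorem~\ref{theolieerrorlocalsd} for Lie-Trotter, namely to compare the numerically computed intermediate value with the \emph{exact} solution of the nonlinear subproblem, absorbing the splitting discrepancy via the order-$p$ accuracy of $\Psi$, and then to match the resulting expression against the Taylor expansion of $u(t_{n+1})$. First I would introduce $\overline{z}_n(s)$, the exact solution of the final nonlinear subproblem (\ref{strangsd-03}) with initial value $\overline{z}_n(0)=\overline{w}_n(k)$, so that $\Psi_{k/2}^{t_n+k/2}(\overline{w}_n(k))-\overline{z}_n(k/2)=O(k^{p+1})=O(k^2)$ by hypothesis. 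The error then splits as
\begin{eqnarray*}
\rho_{n+1}=\big[\Psi_{k/2}^{t_n+\frac{k}{2}}(\overline{w}_n(k))-\overline{z}_n(k/2)\big]+\big[\overline{z}_n(k/2)-u(t_{n+1})\big],
\end{eqnarray*}
and it remains to show the second bracket is $O(k^2)$.

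For the second bracket I would expand $\overline{z}_n(k/2)$ in a first-order Taylor expansion about $s=0$, giving $\overline{z}_n(k/2)=\overline{w}_n(k)+\tfrac{k}{2}f(t_n+\tfrac{k}{2},\overline{w}_n(k))+O(k^2)$. The key input here is Lemma~\ref{regstrang}, which provides the closed form of $\overline{w}_n(k)$. Evaluating that lemma at $s=k$ and examining each term: the factor $e^{kA_0}(\Psi_{k/2}^{f,t_n}(u(t_n))-u(t_n)-\tfrac{k}{2}f(t_n,u(t_n)))$ is the consistency defect of $\Psi$ after a step $k/2$, which is $O(k^{p+1})=O(k^2)$ since $p\ge1$; the term $\tfrac{k}{2}k\varphi_1(kA_0)Af(t_n,u(t_n))$ is $O(k^2)$ because $\varphi_1(kA_0)$ is bounded by (A2) and $Af(t_n,u(t_n))$ is well defined by (A6); and $k^2\varphi_2(kA_0)A^2u(t_n)$ is likewise $O(k^2)$ using boundedness of $\varphi_2$ together with $u(t_n)\in D(A^2)$ from (A5). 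Hence $\overline{w}_n(k)=u(t_n)+\tfrac{k}{2}f(t_n,u(t_n))+kAu(t_n)+O(k^2)$.

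Substituting this into the Taylor expansion of $\overline{z}_n(k/2)$ and collecting terms, the leading part becomes $u(t_n)+kAu(t_n)+\tfrac{k}{2}f(t_n,u(t_n))+\tfrac{k}{2}f(t_n,u(t_n))+O(k^2)=u(t_n)+k\big(Au(t_n)+f(t_n,u(t_n))\big)+O(k^2)$, where I would use the Lipschitz continuity (\ref{locallipschtz}) and continuity of $f$ to replace $f(t_n+\tfrac{k}{2},\overline{w}_n(k))$ by $f(t_n,u(t_n))$ up to $O(k)$ inside the $\tfrac{k}{2}$-prefactored term. On the other hand, the exact solution satisfies $u(t_{n+1})=u(t_n)+ku'(t_n)+O(k^2)=u(t_n)+k\big(Au(t_n)+f(t_n,u(t_n))\big)+O(k^2)$ by (A5) and the differential equation in (\ref{laibvp}). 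The two expansions agree through first order, so the second bracket is $O(k^2)$, completing the proof.

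I expect the main obstacle to be purely bookkeeping rather than conceptual: one must verify carefully that each $O(k^2)$ claim for the terms in Lemma~\ref{regstrang} is legitimate, i.e. that the $\varphi_j(kA_0)$ are genuinely uniformly bounded (guaranteed by (A2)) and that the objects they act on, $Af(t_n,u(t_n))$ and $A^2u(t_n)$, lie in $X$ with norms controlled independently of $n$ (guaranteed by (A5)--(A6)). The only subtlety worth flagging is that, unlike the non-stiff ODE case where Strang attains local order $3$, here we deliberately settle for order $2$: the term $\tfrac{k}{2}k\varphi_1(kA_0)Af(t_n,u(t_n))$ does not cancel and is only $O(k^2)$, which is exactly the price paid for choosing the boundary (\ref{wnsombrero}) so that it is computable from data. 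This is consistent with the remark preceding the theorem, and the recovery of full order for the \emph{global} error is deferred to the summation-by-parts argument of Section~\ref{globalstrang}.
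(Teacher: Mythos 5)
Your proposal is correct and follows essentially the same route as the paper: the identical decomposition $\rho_{n+1}=[\Psi_{\frac{k}{2}}^{f,t_n+\frac{k}{2}}(\overline{w}_n(k))-\overline{z}_n(\tfrac{k}{2})]+[\overline{z}_n(\tfrac{k}{2})-u(t_{n+1})]$, the $O(k^{p+1})$ bound for the first bracket, and the expansion of $\overline{w}_n(k)$ via Lemma \ref{regstrang} matched against the Taylor expansion of $u(t_{n+1})$. The only nitpick is terminological: the term $\Psi_{\frac{k}{2}}^{f,t_n}(u(t_n))-u(t_n)-\tfrac{k}{2}f(t_n,u(t_n))$ is not purely the consistency defect of $\Psi$ but that defect plus the $O(k^2)$ Taylor remainder of the exact subflow, which still gives the $O(k^2)$ bound you claim.
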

\begin{proof}
Notice that
\begin{eqnarray}
\rho_{n+1}&=&\bar{u}_{n+1}-u(t_{n+1})
=[\Psi_{\frac{k}{2}}^{f,t_n+\frac{k}{2}}(\overline{w}_n(k))-\overline{z}_n(\frac{k}{2})]+[\overline{z}_n(\frac{k}{2})-u(t_{n+1})] \nonumber \\
&=&\overline{w}_n(k)+ \frac{k}{2} f(t_n+\frac{k}{2}, \overline{w}_n(k))-u(t_n)-k Au(t_n)-k f(t_n,u(t_n)) \nonumber \\
&&+O(k^{p+1})+O(k^2) \nonumber \\
&=& O(k^2), \label{rho}
\end{eqnarray}
where the last equality is deduced from Lemma \ref{regstrang} and
the fact that $p\ge 1$.
\end{proof}
Moreover, assuming a bit more regularity of the functions $u$ and
$f$ and a bit more accuracy of the time numerical integrator for the nonlinear part,
we have the following result:
\begin{theorem}
\label{t5}Whenever, apart from hypotheses (A1)-(A6), $u\in
C^3([0,T], X)$, $f\in C^2([0,T]\times X, X)$ and
$f_u(\cdot,u(\cdot))f(\cdot,u(\cdot)), f_u(\cdot,u(\cdot)) A
u(\cdot) \in C([0,T], X)$, when integrating (\ref{laibvp}) with
Strang method using the technique
(\ref{strangsd-01})-(\ref{strangsd-final}) with a numerical
integrator $\Psi_k$ which is of order $p\ge 2$ for problems
(\ref{strangsd-01}) and (\ref{strangsd-03}), the local error
satisfies
\begin{eqnarray*}
A_0^{-1}\rho_{n+1}=O(k^3).
\end{eqnarray*}
\end{theorem}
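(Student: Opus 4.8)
The plan is to sharpen the computation used for Theorem~\ref{t4}, carrying every Taylor expansion one order further so that the remainder is $O(k^3)$ instead of $O(k^2)$. This is precisely where the extra hypotheses enter: $u\in C^3([0,T],X)$ lets me expand $u(t_{n+1})$ up to its $u''(t_n)$ term; $f\in C^2$ together with $p\ge 2$ lets me expand the two nonlinear subflows to second order in $k$ and replace each $\Psi_{k/2}$ by the corresponding exact flow up to $O(k^3)$; and the continuity of $f_u(\cdot,u(\cdot))f(\cdot,u(\cdot))$ and $f_u(\cdot,u(\cdot))Au(\cdot)$ (together with (A5) and (A6)) guarantees that all vectors appearing as coefficients are bounded on $[0,T]$.

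First I would expand the building blocks at $s=k$. Using Lemma~\ref{regstrang} and, thanks to $p\ge 2$, the relation $\Psi_{\frac{k}{2}}^{f,t_n}(u(t_n))-u(t_n)-\frac{k}{2}f(t_n,u(t_n))=\frac{k^2}{8}(f_t+f_uf)(t_n,u(t_n))+O(k^3)$, I get $\overline{w}_n(k)$ up to $O(k^3)$. I then Taylor-expand the outer nonlinear flow $\overline{z}_n(\frac{k}{2})$ about the initial value $\overline{w}_n(k)$ to second order and again use $p\ge 2$ to write $\overline{u}_{n+1}=\overline{z}_n(\frac{k}{2})+O(k^3)$. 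Finally I expand $u(t_{n+1})=u(t_n)+ku'(t_n)+\frac{k^2}{2}u''(t_n)+O(k^3)$ with $u'=Au+f$ and $u''=A^2u+Af+f_t+f_u(Au+f)$, all derivatives evaluated at $(t_n,u(t_n))$.

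Collecting powers of $k$, the terms of orders $1$ and $k$ cancel by consistency, while among the $O(k^2)$ terms the $f_uAu$ contributions cancel entirely and the smooth $f_t,f_uf$ contributions partially cancel against $\frac{k^2}{2}u''(t_n)$. I expect the surviving local error to organise into three blocks, each a $\varphi$-function minus its value at the origin acting on a bounded vector:
\[
\rho_{n+1}=\frac{k^2}{8}(e^{kA_0}-I)(f_t+f_uf)+\frac{k^2}{2}(\varphi_1(kA_0)-I)Af+k^2\Big(\varphi_2(kA_0)-\frac{1}{2}I\Big)A^2u+O(k^3),
\]
with $f$ and its derivatives evaluated at $(t_n,u(t_n))$. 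The decisive step is then to invoke the recurrence (\ref{recurf}), which gives the operator identities $A_0^{-1}(e^{kA_0}-I)=k\varphi_1(kA_0)$, $A_0^{-1}(\varphi_1(kA_0)-I)=k\varphi_2(kA_0)$ and $A_0^{-1}(\varphi_2(kA_0)-\frac{1}{2}I)=k\varphi_3(kA_0)$. Multiplying by the bounded operator $A_0^{-1}$ (bounded since the type $\omega$ of the semigroup is negative), each block gains one factor of $k$, so
\[
A_0^{-1}\rho_{n+1}=\frac{k^3}{8}\varphi_1(kA_0)(f_t+f_uf)+\frac{k^3}{2}\varphi_2(kA_0)Af+k^3\varphi_3(kA_0)A^2u+O(k^3),
\]
and since by (A2) the operators $\varphi_1(kA_0),\varphi_2(kA_0),\varphi_3(kA_0)$ are uniformly bounded and the vectors are bounded on $[0,T]$, every term is $O(k^3)$.

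The main obstacle is not these final identities but the algebra that produces the displayed expression for $\rho_{n+1}$: one must track all $O(k^2)$ contributions at once and verify the exact cancellation of the non-stiff pieces ($f_t$, $f_uf$, $f_uAu$) against the Taylor expansion of $u(t_{n+1})$, so that the operator coefficient of every surviving $O(k^2)$ term is a $\varphi$-function vanishing at $z=0$. A mere norm bound does not suffice here; the gain of the extra power of $k$ relies entirely on this structure, and a single mismatched constant would leave a genuine $O(k^2)$ term after applying $A_0^{-1}$.
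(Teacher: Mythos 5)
Your proposal is correct and follows essentially the same route as the paper: expand the Strang recursion to the explicit $k^2$ terms via Lemma \ref{regstrang} and the order-$p\ge 2$ assumption, use the recurrence (\ref{recurf}) to trade $A_0^{-1}$ against $\varphi$-functions for an extra factor of $k$, and kill the residual bracket by differentiating (\ref{laibvp}) in time. The only difference is cosmetic ordering — you cancel against $u''$ before applying $A_0^{-1}$ (so every surviving $k^2$ coefficient is a $\varphi$-function vanishing at the origin), whereas the paper applies $A_0^{-1}$ first and cancels afterwards; the two computations are identical term by term.
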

\begin{proof}
Under the new assumptions, by explicitly writing the term in $k^2$
in (\ref{rho}), we have
\begin{eqnarray}
\rho_{n+1}&=&\frac{k^2}{4} f_t(t_n,u(t_n))+\frac{k^2}{4}f_u(t_n,u(t_n)) f(t_n,u(t_n))+\frac{k^2}{2}f_u(t_n,u(t_n)) A u(t_n)\nonumber \\
&&+\frac{k^2}{8}[f_t(t_n,u(t_n))+f_u(t_n,u(t_n))f(t_n,u(t_n))] \nonumber \\
&&+\frac{k^2}{8} e^{k A_0}[f_t(t_n,u(t_n))+f_u(t_n,u(t_n))f(t_n,u(t_n))] \nonumber \\
&&+\frac{k^2}{2}\varphi_1(k A_0) A f(t_n,u(t_n))+k^2 \varphi_2(k
A_0) A^2 u(t_n)-\frac{k^2}{2}u''(t_n) +O(k^3).
\end{eqnarray}
By applying now $A_0^{-1}$, considering (\ref{recurf}) and
simplifying terms and the notation for the sake of brevity,
\begin{eqnarray}
A_0^{-1}\rho_{n+1}&=&k^2 A_0^{-1}[\frac{3}{8}f_t+ \frac{3}{8}f_u f
+\frac{1}{2} f_u A u-\frac{1}{2} u''] +\frac{k^2}{8}( k
\varphi_1(k A_0)+A_0^{-1})(f_t+f_u  f)
\nonumber \\
&&
+\frac{k^2}{2}(k \varphi_2(k A_0)+A_0^{-1}) A f+ k^2(k \varphi_3(k A_0)+\frac{1}{2}A_0^{-1}) A^2 u +O(k^3) \nonumber \\
&=& k^2 A_0^{-1}[ \frac{1}{2}(f_t+f_u f)+\frac{1}{2} f_u Au
+\frac{1}{2} A f +\frac{1}{2} A^2 u-\frac{1}{2} u'']+O(k^3),
\nonumber
\end{eqnarray}
where, in order to see that the term in bracket vanishes, it
suffices to differentiate (\ref{laibvp}) once with respect to
time.

\end{proof}
\section{Spatial discretization}
\label{sd} Following the example in Section 2, we take
$X=C(\overline{\Omega})$ with the maximum norm and we consider a
certain grid $\Omega_h$ (of $\Omega$) over which the approximated
numerical solution will be defined. In this way, this numerical
approximation belongs to $C^N$, where $N$ is the number of nodes
in the grid, endowed with the the maximum norm $\|u_h\|_h=\|[u_1,
\ldots, u_N]^T\|_h= \max_{1 \le i \le N}|u_i|$.

Notice that, usually, when considering Dirichlet boundary
conditions, nodes on the boundary are not considered while, when
using Neumann or Robin boundary conditions, the nodes on the
boundary are taken into account.

In that sense, we consider the projection operator
\begin{eqnarray}
 P_h : X \to \mathbb{C}^N, \label{ph}
\end{eqnarray}
which takes a function to its values over the grid $\Omega_h$.
 On the other hand, the
operator $A$, when applied over functions which satisfy a certain
condition on the boundary $\partial u=g$, is discretized by means
of an operator
\begin{eqnarray*}
A_{h,g}: \mathbb{C}^N \to \mathbb{C}^N,
\end{eqnarray*}
which takes the boundary values into account. More precisely,
\begin{eqnarray*}
A_{h,g} U_h=A_{h,0}U_h+C_h g,
\end{eqnarray*}
where $A_{h,0}$ is the matrix which discretizes $A_0$ and $C_h: Y
\to \mathbb{C}^N$ is another operator, which is the one which
contains the information on the boundary.

We also assume that the source function $f$ has also sense as
function from $[0,T]\times \mathbb{C}^N$ on $\mathbb{C}^N$ and,
for each $t \in [0,T]$ and $u \in X$,
\begin{eqnarray}
\label{fandph} P_hf(t,u)= f(t, P_hu).
\end{eqnarray}
This fact is obvious when $f$ is given by (\ref{nonlinearterm}).
By using this, the following semidiscrete problem arises after
discretising (\ref{laibvp}) in space,
\begin{eqnarray}
\label{laibvpdisc} \left.
\begin{array}{rcl}
U'_h(t)&=&A_{h,0}U_h(t) + C_{h}g(t) +f(t,U_h(t)),\\
U_h(0)&=&P_h u(0),
\end{array}
\right.
\end{eqnarray}

The subsequent analysis is carried out under the following
hypotheses:
\begin{enumerate}
\item[(H1)] The matrix $A_{h,0}$ satisfies
\begin{enumerate}
\item $\|e^{tA_{h,0}}\|_h \le 1$,

\item $A_{h,0}$ is invertible and $\|A_{h,0}^{-1}\|_h \le C$ for
some constant $C$ which does not depend on $h$,
\end{enumerate}
where $\|\cdot\|_h$ is the norm operator obtained
from the maximum norm in $\mathbb{C}^N$.

\item[(H2)] We define the elliptic projection $R_{h}:D(A) \to
\mathbb{C}^N$  as the solution of
\begin{eqnarray}
A_{h,0} R_{h} u+ C_{h}\partial u=P_h Au. \label{rh}
\end{eqnarray}
We assume that there exists a subspace $Z \subset D(A)$, such
that, for $u \in Z$,
\begin{enumerate}
\item[(a)] $A_0^{-1} u \in Z$ and $e^{tA_0}u \in Z$, for $t \ge
0$.
\item[(b)] for some $\varepsilon_{h}$ and $\eta_{h}$ which are
both small with $h$,
\begin{equation}
\label{consistency} \hspace{-0.5cm} \left\| A_{h,0}({P_hu-R_{h}u})
\right\| \le \varepsilon_{h} \left\| u \right\|_Z, \quad
\left\|P_hu-R_{h}u \right\| \le \eta_{h} \left\| u \right\|_Z.
\end{equation}
(Although obviously, because of (H1), $\eta_h$ could be taken as
$C\varepsilon_h$, for some discretizations $\eta_h$ can decrease more quickly with $h$
than $\varepsilon_h$ and that leads to better error bounds in the
following section.)
\end{enumerate}
\item[(H3)] The nonlinear source $f$ belongs to
$C^1([0,T] \times X_h, X_h)$ and the derivative with respect to the variable $u_h$
is uniformly bounded in a neighbourhood of the solution where the numerical approximation stays.
\end{enumerate}

\begin{remark}
Hypothesis (H1a) can be deduced in our numerical experiments
by using the logarithmic norm of matrix
$A_{h,0}$, which is given by \cite{dahlquist}
\begin{eqnarray*}
\mu(A_{h,0})=\lim_{\tau \to 0^+}\frac{\| I+\tau
A_{h,0}\|-1}{\tau}.
\end{eqnarray*}
From the logarithmic norm, we obtain the bound
\begin{eqnarray*}
\| e^{tA_{h,0}}\| \le e^{t\mu(A_{h,0})}.
\end{eqnarray*}

In particular, with the maximum norm
($\|\mathbf{u}\|_\infty=\max_i|u_i|$), which is the one used in
our examples, we have
\begin{eqnarray*}
\mu_\infty(A)=\max_i\left( \Re a_{ii}+\sum_{j\not=
i}|a_{ji}|\right),
\end{eqnarray*}
which can be easily checked. For the matrices  $A_{h,0}$ in our numerical experiments, it is easily seen
that $\mu(A_{h,0})_\infty=0$ and (H1a) holds.
\end{remark}

\begin{remark}
From (H3), the non linear term $f$ satisfies, for some Lipschitz
constant $L$ independent of $t\in [0,T]$ and the maximum norm,
\begin{eqnarray}
\label{locallipschtzdisc}
  \|f(t, v_h)-f(t,u_h)\|_h\le  L\|v_h-u_h \|_h,
\end{eqnarray}
when $u_h,v_h$ belong to a compact set. In particular,
we will be interested in considering as this set a neighborhood of the
exact solution where the numerical approximation stays.
\end{remark}

\section{Full discretization: exponential Lie-Trotter splitting}

\subsection{Final formula for the
implementation} We apply the above space discretization
to the evolutionary problems (\ref{vn}) and (\ref{wnk}) and we
obtain $V_{h,n}(s), W_{h,n}(s)$ in $\mathbb{C}^N$ as the solutions
of
\begin{eqnarray}
V'_{h,n}(s)&=&A_{h,0}V_{h,n}(s)+C_h \partial \hat{v}_{n}(s), \nonumber\\
V_{h,n}(0)&=&U_{h,n}, \label{Vhn}
\end{eqnarray}
where $\hat{v}_{n}(s)$ is that in (\ref{vh}), $U_{h,n}\in
\mathbb{C}^N$ is the numerical solution in the interior of the
domain after full discretization at $n$ steps, and
\begin{eqnarray}
W'_{h,n}(s)&=& f(t_n+s, W_{h,n}(s)), \label{Whnk}\\
W_{h,n}(0)&=&V_{h,n}(k). \nonumber
\end{eqnarray}
By using the variations of constants formula and the definition of
the functions $\varphi_1$ and $\varphi_2$ in (\ref{varphi}),
\begin{eqnarray}
V_{h,n}(k)&=&e^{k A_{h,0}} U_{h,n}+\int_0^k e^{(k-s) A_{h,0}}\big[C_h\partial [u(t_n)+s A u(t_n)] \big]ds \nonumber \\
       &=&e^{k A_{h,0}} U_{h,n}+ k \varphi_1( k A_{h,0})C_h  g(t_n) \nonumber \\
&&+ k^2 \varphi_2( k A_{h,0}) C_h(g'(t_n)-\partial f(t_n,u(t_n))),
\label{ffv}
\end{eqnarray}
and the numerical solution at step $n+1$ is therefore given by
\begin{eqnarray}
U_{h,n+1}=\Psi_{k}^{f,t_n}(V_{h,n}(k)), \label{Uhn_1}
\end{eqnarray}
where $\Psi_{k}^{f,t_n}$ stands for the previously mentioned
numerical integrator applied to (\ref{Whnk}).

Moreover, we will take, as initial condition,
\begin{eqnarray}
U_{h,0}=P_h u(0). \label{initn}
\end{eqnarray}
\begin{remark}
\label{nor} Notice that, when
$$\partial u(t_n)=\partial A u(t_n)=0,$$
it is also deduced from (\ref{laibvp}) that $\partial f(t_n,
u(t_n))=0$. Therefore, formulas (\ref{ffv})-(\ref{Uhn_1}) just
reduce to the standard time integration with Lie-Trotter method of
the differential system which arises after discretizing
(\ref{laibvp}) directly in space  (see (\ref{laibvpdisc})):
$$U_h'(t)= A_{h,0} U_h(t)+f(t, U_h(t)).$$
Because of that, with the results which follow, we will be implicitly proving that there is no
order reduction in the local error with the standard Lie-Trotter
method under these assumptions.
\end{remark}

\begin{remark}
We notice that,  when $k$ is fixed, $e^{k A_{h,0}}$ and
$\varphi_{j}(k A_{h,0})$ could be calculated once and for all at
the very beginning.  Besides, as better explained in the numerical
experiments, $C_h$ will be represented by a matrix of dimension
$O(\hat{N}^d)\times O(\hat{N}^{d-1})$ where $d$ is the dimension
of the problem and $\hat{N}$ the number of grid points in each
direction. Then, $\varphi_{j}(k A_{h,0})C_h$ ($j=1,2$) will be
represented by  matrices of the same order and therefore the
computational cost of calculating the product of those matrices
times the information on the boundary values is
$O(\hat{N}^{2d-1})$, which is negligible compared with
$O(\hat{N}^{2d})$, which corresponds to the calculation of the
product of  $e^{k A_{h,0}}$ times a vector of size $O(\hat{N}^d)$.
On the other hand, for fixed and variable timestepsize $k$, Krylov
techniques can also be used to calculate the terms in (\ref{ffv})
without explicitly calculating $e^{k A_{h,0}}$, $\varphi_{j}(k
A_{h,0})$ ($j=1,2$). As suggested in \cite{Grimm}, it seems in
principle cheaper to calculate the terms containing the
$\varphi_j$-functions than those corresponding to the
exponentials.
\end{remark}

\subsection{Local errors}
In order to define the local error, we consider

\begin{eqnarray}
\overline{U}_{h}^{n+1}=\Psi_{k}^{f, t_n}(\overline{V}_{h,n}(k)),
\label{Ub}
\end{eqnarray}
where $\overline{V}_{h,n}(s)$ is the solution of
\begin{eqnarray}
\label{oVhnk}
\begin{array}{rcl}
\overline{V}'_{h,n}(s)&=&A_{h,0}\overline{V}_{h,n}(s)+C_{h}\partial \hat{v}_{n}(s),\\
\overline{V}_{h,n}(0)&=&P_h u(t_n).
\end{array}
\end{eqnarray}
with $\hat{v}_n(s)$ that in (\ref{vh}). We now define the local
error at $t=t_n$ as
\begin{eqnarray*}
\rho_{h,n}=P_{h} u(t_n)-\overline{U}_{h,n},
\end{eqnarray*}
and study its  behaviour in the following theorem.
\begin{theorem}
\label{theolocalerrorfull} Let us assume hypotheses
(A1)-(A6), that $\Psi_k$ integrates (\ref{Whnk}) with order $p\ge
1$ and (H1)-(H3). Then, when integrating (\ref{laibvp}) with
Lie-Trotter method as described in (\ref{ffv})-(\ref{Uhn_1}),
whenever $u$ satisfies
\begin{eqnarray}
u, A u, A^2 u \in C([0,T],  Z),
\label{regs}
\end{eqnarray}
for the space $Z$  in (H3), the local error after full
discretization satisfies
\begin{eqnarray}
\rho_{h, n+1}=O(k\varepsilon_{h}+ k^{2}), \quad A_{h,0}^{-1}
\rho_{h,n+1}=O(k \eta_h+k^2). \label{lefdlt}
\end{eqnarray}
where $\varepsilon_{h}$ and $\eta_h$ are those in
(\ref{consistency}).
\end{theorem}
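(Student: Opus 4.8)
The plan is to split the local error into a purely temporal part, already controlled by Theorem \ref{theolieerrorlocalsd}, and a purely spatial part measuring the mismatch between the exact time-semidiscrete stage $\overline{v}_n(k)$ of Lemma \ref{teor1} and its full discretization $\overline{V}_{h,n}(k)$ from (\ref{oVhnk}). Writing $\overline{u}_{n+1}=\Psi_k^{f,t_n}(\overline{v}_n(k))$ for the time-semidiscrete approximation of Section 3, the identity (\ref{fandph}) forces the one-step integrator to commute with the grid projection, so that $P_h\overline{u}_{n+1}=\Psi_k^{f,t_n}(P_h\overline{v}_n(k))$. Hence I would write
$$\rho_{h,n+1}=\big[P_h u(t_{n+1})-P_h\overline{u}_{n+1}\big]+\big[\Psi_k^{f,t_n}(P_h\overline{v}_n(k))-\Psi_k^{f,t_n}(\overline{V}_{h,n}(k))\big],$$
where the first bracket equals $P_h(u(t_{n+1})-\overline{u}_{n+1})=O(k^2)$ by Theorem \ref{theolieerrorlocalsd}, using $\|P_h\|\le 1$ in the maximum norm.

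For the second bracket I would first estimate $e_n(s)=P_h\overline{v}_n(s)-\overline{V}_{h,n}(s)$, which vanishes at $s=0$. Differentiating and using that $\overline{v}_n$ carries the boundary values $\partial\hat{v}_n(s)$ of (\ref{vh}) together with the definition (\ref{rh}) of the elliptic projection $R_h$, the inhomogeneous term $C_h\partial\hat{v}_n(s)$ cancels and one is left with the error equation
$$e_n'(s)=A_{h,0}e_n(s)-A_{h,0}\big(P_h\overline{v}_n(s)-R_h\overline{v}_n(s)\big),\qquad e_n(0)=0.$$
Variation of constants with the contractivity (H1a) then gives $e_n(k)=-\int_0^k e^{(k-\tau)A_{h,0}}A_{h,0}(P_h-R_h)\overline{v}_n(\tau)\,d\tau$. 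Lemma \ref{teor1} expresses $\overline{v}_n(\tau)$ through $u(t_n)$, $Au(t_n)$ and $\varphi_2(\tau A_0)A^2u(t_n)$; invoking the regularity (\ref{regs}) and the invariance (H2a) of $Z$ under $e^{\tau A_0}$ and $A_0^{-1}$, each summand lies in $Z$ with $Z$-norm bounded uniformly in $\tau\in[0,k]$ and $n$.

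The two bounds then follow from the two consistency estimates in (\ref{consistency}). Applying (H1a) and the first inequality, $\|A_{h,0}(P_h-R_h)\overline{v}_n(\tau)\|\le\varepsilon_h\|\overline{v}_n(\tau)\|_Z$, yields $\|e_n(k)\|\le Ck\varepsilon_h$; since $\Psi_k^{f,t_n}$ is Lipschitz with constant $1+O(k)$ (a one-step integrator of $f$, Lipschitz by (\ref{locallipschtzdisc})), the second bracket is $O(k\varepsilon_h)$ and $\rho_{h,n+1}=O(k\varepsilon_h+k^2)$. For the weighted bound I would apply $A_{h,0}^{-1}$ inside the integral, turning $A_{h,0}(P_h-R_h)$ into $(P_h-R_h)$ and allowing the second inequality of (\ref{consistency}), $\|(P_h-R_h)\overline{v}_n(\tau)\|\le\eta_h\|\overline{v}_n(\tau)\|_Z$, to be used; this gives $\|A_{h,0}^{-1}e_n(k)\|\le Ck\eta_h$. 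Writing $\Psi_k^{f,t_n}(v)-\Psi_k^{f,t_n}(w)=(v-w)+k\,(\text{Lipschitz remainder})$ and using (H1b) to bound $A_{h,0}^{-1}$ on the temporal bracket, the remaining contributions are $O(k^2)$ and $O(k^2\varepsilon_h)$, so $A_{h,0}^{-1}\rho_{h,n+1}=O(k\eta_h+k^2)$.

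The main obstacle is this second, weighted estimate: one cannot simply bound $A_{h,0}^{-1}\rho_{h,n+1}$ by $\|A_{h,0}^{-1}\|$ times the first estimate, since that would only reproduce $\varepsilon_h$. The improvement to $\eta_h$ relies on the structure $e_n(k)=A_{h,0}\int_0^k e^{(k-\tau)A_{h,0}}(R_h-P_h)\overline{v}_n(\tau)\,d\tau$, where the factor $A_{h,0}$ generated by the error equation is precisely the one removed by $A_{h,0}^{-1}$; one must check that $A_{h,0}^{-1}$ passes through the exponential (which holds, as both are functions of $A_{h,0}$) and that the Lipschitz remainder of $\Psi_k$ contributes only higher-order terms. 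The uniform $Z$-bound on $\overline{v}_n(\tau)$, underpinning both estimates through the invariance hypothesis (H2a), is the other technical point deserving care.
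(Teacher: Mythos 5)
Your proposal is correct and takes essentially the same approach as the paper: the same splitting of $\rho_{h,n+1}$ into the semidiscrete local error $P_h\rho_{n+1}$ (controlled by Theorem \ref{theolieerrorlocalsd}) plus the spatial mismatch between $P_h\overline{v}_n$ and $\overline{V}_{h,n}$, the same error equation in which the boundary term $C_h\partial\hat{v}_n(s)$ cancels through the elliptic projection $R_h$ of (\ref{rh}), the same variation-of-constants bound using (H1), (H2) and Lemma \ref{teor1}, and the same device of applying $A_{h,0}^{-1}$ inside the integral to trade $\varepsilon_h$ for $\eta_h$. The only cosmetic difference is that you commute $P_h$ with $\Psi_k$ via (\ref{fandph}) and then invoke Lipschitz continuity of the integrator, whereas the paper expands $\Psi_k(v)=v+kf(t_n,v)+O(k^2)$ on both the semidiscrete and fully discrete levels --- a bookkeeping variation the paper itself uses in the Strang analysis.
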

\begin{proof} Notice that
\begin{eqnarray}
\overline{U}_{h}^{n+1}&=&\Psi_{k}^{f,t_n}(\overline{V}_{h,n}(k))=\overline{V}_{h,n}(k)+k f(t_n,
\overline{V}_{h,n}(k))+O(k^2). \nonumber
\end{eqnarray}

On the other hand, making the difference between (\ref{oVhnk}) and
(\ref{ovn}) multiplied by $P_h$,
\begin{eqnarray}
\overline{V}_{h,n}'(s)-P_h \overline{v}_n'(s)&=& A_{h,0} (\overline{V}_{h,n}(s)-
P_h \overline{v}_n(s))+A_{h,0}(P_h-R_h)\overline{v}_n(s), \nonumber \\
\overline{V}_{h,n}(0)-P_h \overline{v}_n(0)&=& 0. \nonumber
\end{eqnarray}
Then,
\begin{eqnarray}
\overline{V}_{h,n}(k)&=&P_h \overline{v}_n(k)+\int_0^k e^{(k-s)
A_{h,0}}A_{h,0}(P_h-R_h) \overline{v}_n(s)ds
\nonumber \\
&=& P_h u(t_n)+ k P_h A u(t_n)+O(k^2)+ O(k\varepsilon_h),
\label{ovhn}
\end{eqnarray}
where the last equality comes from  Lemma \ref{teor1}, (H1) and
(H2). From the definition of $\rho_{h,n}$,
\begin{eqnarray*}
\rho_{h,n+1}&=&P_{h}u(t_{n+1})-\overline{U}_{h,n+1}= P_{h}(u(t_{n+1})-
\overline{u}_{n+1})+( P_{h}\overline{u}_{n+1}-\overline{U}_{h,n+1}) \nonumber \\
&=& P_h \rho_{n+1}+
P_{h}\overline{u}_{n+1}-\overline{V}_{h,n}(k)-k f(t_n,
\overline{V}_{h,n}(k))+O(k^2).
\end{eqnarray*}
Considering now Theorem \ref{theolieerrorlocalsd}, (\ref{ph}),
(\ref{ovhn}), (H3) and the fact that, because of Lemma
\ref{teor1},
\begin{eqnarray}
\overline{u}_{n+1}&=&\Psi_{k}^{f,t_n}(\overline{v}_n(k))=\overline{v}_n(k)+ k f(t_n,\overline{v}_n(k))+O(k^2) \nonumber \\
&=& u(t_n)+ k A u(t_n)+ k f(t_n,u(t_n))+O(k^2), \nonumber
\end{eqnarray}
the first part of the theorem is proved. To prove the second bound
in (\ref{lefdlt}), it suffices to apply the uniformly bounded
matrix $A_{h,0}^{-1}$ to the above formulas and to take the second
part of (\ref{consistency}) into account.
\end{proof}

\subsection{Global errors}
We now study the global errors at $t=t_n$, which are given by
\begin{eqnarray*}
e_{h,n}=P_h u(t_n)-U_{h,n}.
\end{eqnarray*}
\begin{theorem}
\label{theoglobalerrorfull} Under the same assumptions of Theorem
\ref{theolocalerrorfull}, and assuming also that  $\partial
f(t_n,u(t_n))$ can be calculated exactly from data according to
Remark \ref{r1}, the global error which turns up when integrating
(\ref{laibvp}) through formulas (\ref{ffv})-(\ref{initn})
satisfies
\begin{eqnarray*}
e_{h,n}=O(k+ \varepsilon_{h}),
\end{eqnarray*}
where  $\varepsilon_{h}$ is that in (\ref{consistency}).
\end{theorem}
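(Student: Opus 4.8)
The plan is to pass from the local error of Theorem~\ref{theolocalerrorfull} to the global error by a telescoping (Lady Windermere's fan) argument, whose decisive ingredient is the stability of the one-step map in the maximum norm. Let $\Phi_n$ denote the one-step numerical flow defined by (\ref{ffv})--(\ref{Uhn_1}), so that $U_{h,n+1}=\Phi_n(U_{h,n})$, while the comparison value of Theorem~\ref{theolocalerrorfull} is $\overline{U}_{h}^{n+1}=\Phi_n(P_h u(t_n))$. Inserting $\Phi_n(P_h u(t_n))$ into $e_{h,n+1}=P_hu(t_{n+1})-U_{h,n+1}$ splits the global error as
\[
e_{h,n+1}=\rho_{h,n+1}+\big[\Phi_n(P_hu(t_n))-\Phi_n(U_{h,n})\big],
\]
where $\rho_{h,n+1}=P_hu(t_{n+1})-\Phi_n(P_hu(t_n))$ is exactly the local error already bounded by $O(k\varepsilon_h+k^2)$. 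Everything then reduces to estimating the second bracket in terms of $e_{h,n}$.

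The key observation is that, since $\partial f(t_n,u(t_n))$ is computed exactly from data, the additive boundary contribution $k\varphi_1(kA_{h,0})C_hg(t_n)+k^2\varphi_2(kA_{h,0})C_h(g'(t_n)-\partial f(t_n,u(t_n)))$ appearing in (\ref{ffv}) is independent of the starting vector. Hence in the difference of the two vectors $V_{h,n}(k)$ built from $P_hu(t_n)$ and from $U_{h,n}$ these terms cancel and only the linear propagator remains, namely $e^{kA_{h,0}}e_{h,n}$, whose norm is at most $\|e_{h,n}\|_h$ by (H1a). Applying afterwards the nonlinear integrator $\Psi_{k}^{f,t_n}$ and using that $f$ is Lipschitz near the solution (H3, cf.\ (\ref{locallipschtzdisc})), as is standard for one-step methods applied to Lipschitz right-hand sides one has the stability estimate $\|\Psi_{k}^{f,t_n}(V_1)-\Psi_{k}^{f,t_n}(V_2)\|_h\le(1+Ck)\|V_1-V_2\|_h$ with $C$ depending only on the Lipschitz constant. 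Composing the two steps gives the near-contractivity
\[
\big\|\Phi_n(P_hu(t_n))-\Phi_n(U_{h,n})\big\|_h\le(1+Ck)\|e_{h,n}\|_h,
\]
with $C$ independent of $h$ thanks to (H1).

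I would then close with the discrete Gronwall lemma. The recursion $\|e_{h,n+1}\|_h\le(1+Ck)\|e_{h,n}\|_h+\|\rho_{h,n+1}\|_h$ together with the exact initial datum $e_{h,0}=0$ from (\ref{initn}) yields
\[
\|e_{h,n}\|_h\le\sum_{j=1}^{n}(1+Ck)^{n-j}\|\rho_{h,j}\|_h\le e^{CT}\,n\,O(k\varepsilon_h+k^2).
\]
Since $n\le T/k$, the factor $n$ cancels one power of $k$ and the estimate collapses to $O(\varepsilon_h+k)$, which is the asserted bound.

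The step I expect to be the main obstacle is justifying the use of the Lipschitz constant, which is only valid while both arguments stay in the fixed neighbourhood of the exact solution where (H3) and (\ref{locallipschtzdisc}) hold. This must be secured by an induction on $n$: assuming $\|e_{h,j}\|_h$ small for $j\le n$ keeps $U_{h,n}$ inside that neighbourhood, which validates the stability estimate and hence the bound on $e_{h,n+1}$; for $k$ and $h$ small enough the resulting $O(\varepsilon_h+k)$ is itself small, so the induction is self-consistent. A secondary technical point is to confirm that the one-step Lipschitz stability of $\Psi_k$, and the $h$-uniformity of $C$, follow genuinely from (H1) and (H3) in the maximum norm.
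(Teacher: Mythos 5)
Your proposal is correct and follows essentially the same route as the paper: the same decomposition of $e_{h,n+1}$ into the local error $\rho_{h,n+1}$ plus the propagated difference of the one-step map, the same key observation that exactness of $\partial f(t_n,u(t_n))$ makes the boundary terms in (\ref{ffv}) identical for both starting vectors so that the linear step contributes exactly $e^{kA_{h,0}}e_{h,n}$ (contractive by (H1a)), and the same discrete Gronwall closure with $e_{h,0}=0$ and $n\le T/k$ trading one power of $k$ against the factor $n$. The single point where you diverge is the stability of the nonlinear substep: you assert $\|\Psi_{k}^{f,t_n}(V_1)-\Psi_{k}^{f,t_n}(V_2)\|_h\le(1+Ck)\|V_1-V_2\|_h$ as a standard property of one-step methods, but this is not among the paper's hypotheses, since $\Psi_k$ is only assumed to integrate (\ref{Whnk}) with order $p\ge 1$. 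The paper sidesteps this by writing $\Psi_{k}^{f,t_n}(\overline{V}_{h,n}(k))-\Psi_{k}^{f,t_n}(V_{h,n}(k))=\overline{W}_{h,n}(k)-W_{h,n}(k)+O(k^{p+1})$, i.e.\ replacing the two numerical flows by the exact flows of the nonlinear subproblem at the cost of an additive $O(k^{p+1})=O(k^2)$ per step, and then applying the (continuous) Gronwall lemma to the exact flows via the Lipschitz bound (\ref{locallipschtzdisc}) from (H3); this gives $\overline{W}_{h,n}(k)-W_{h,n}(k)=e^{kA_{h,0}}e_{h,n}+O(k\|e_{h,n}\|_h)$, which is precisely your $(1+Ck)$ recursion up to a harmless $O(k^2)$ term. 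So the answer to your own ``secondary technical point'' is: the $h$-uniform near-contractivity of $\Psi_k$ does \emph{not} follow from (H1)+(H3) for an abstract integrator, but the order-$p$ consistency assumption substitutes for it; with that patch (or by restricting $\Psi_k$ to explicit Runge--Kutta methods, for which your stability claim is genuinely standard), your argument coincides with the paper's proof. Your induction to keep $U_{h,n}$ in the neighbourhood where the Lipschitz constant of (H3) is valid is sound and is, if anything, stated more carefully than in the paper, which simply builds that requirement into (H3).
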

\begin{proof}
It suffices to notice that
\begin{eqnarray}
e_{h,n+1} &=& [P_h u(t_{n+1})-\overline{U}_{h}^{n+1}]+[\overline{U}_{h}^{n+1}-U_{h,n+1}] \nonumber \\
&=&\rho_{h,n+1}+\Psi_{k}^{f,t_n}(\overline{V}_{h,n}(k))-\Psi_{k}^{f,t_n}(V_{h,n}(k)) \nonumber \\
&=&\rho_{h,n+1}+\overline{W}_{h,n}(k)-W_{h,n}(k)+O(k^{p+1}) ,
\nonumber
\end{eqnarray}
where $\overline{W}_{h,n}(k)$ is the solution of (\ref{Whnk})
with initial condition $\overline{V}_{h,n}(k)$, and the definition
of $\rho_{h,n+1}$, (\ref{Uhn_1}) and (\ref{Ub}) have been used.
Then, considering (\ref{Whnk}),
\begin{eqnarray}
\overline{W}'_{h,n}(t)-W'_{h,n}(s)&=& f(t_n+s, \overline{W}_{h,n}(s))-f(t_n+s, W_{h,n}(s)), \nonumber\\
\overline{W}_{h,n}(0)-W_{h,n}(0)&=&\overline{V}_{h,n}(k)-V_{h,n}(k),
\nonumber
\end{eqnarray}
and
\begin{eqnarray}
\overline{W}_{h,n}(t)-W_{h,n}(t)&=&\overline{W}_{h,n}(0)-W_{h,n}(0)+
\int_0^t [ f(t_n+s,\overline{W}_{h,n}(s))-f(t_n+s,W_{h,n}(s))] ds
\nonumber\\
&=&\overline{V}_{h,n}(k)-V_{h,n}(k)+ \int_0^t [
f(t_n+s,\overline{W}_{h,n}(s))-f(t_n+s,W_{h,n}(s))] ds. \nonumber
\end{eqnarray}
Taking norms,
\begin{eqnarray}
\|\overline{W}_{h,n}(t)-W_{h,n}(t)\|&\le&
\|\overline{V}_{h,n}(k)-V_{h,n}(k)\|+ \int_0^t \|
f(t_n+s,\overline{W}_{h,n}(s))-f(t_n+s,W_{h,n}(s))\| ds
\nonumber\\
&\le& \|\overline{V}_{h,n}(k)-V_{h,n}(k)\|+ \int_0^k L\|
\overline{W}_{h,n}(s)-W_{h,n}(s)\| ds,
\end{eqnarray}
where (H3) has been used. We can
then apply Gronwall lemma and deduce that
\begin{eqnarray*}
\|\overline{W}_{h,n}(t)-W_{h,n}(t)\| &\le&
e^{Lt}\|\overline{V}_{h,n}(k)-V_{h,n}(k)\|.
\end{eqnarray*}

Moreover,
\begin{eqnarray}
\overline{W}_{h,n}(k)-W_{h,n}(k)&=&\overline{V}_{h,n}(k)-V_{h,n}(k)+
\int_0^k [ f(t_n+s,\overline{W}_{h,n}(s))-f(t_n+s,W_{h,n}(s))]
ds\nonumber\\
&=&\overline{V}_{h,n}(k)-V_{h,n}(k)+E(\overline{V}_{h,n}(k),V_{h,n}(k)),
\end{eqnarray}
where
\begin{eqnarray*}
\|E(\overline{V}_{h,n}(k),V_{h,n}(k))\|&=&\|\int_0^k [
f(t_n+s,\overline{W}_{h,n}(s))-f(t_n+s,W_{h,n}(s))]\|\\
&\le&L\int_0^k e^{sL}\|\overline{V}_{h,n}(k)-V_{h,n}(k)\| ds \le
kC\|\overline{V}_{h,n}(k)-V_{h,n}(k)\|,
\end{eqnarray*}
for a constant $C$ which is independent of $k$, when $k$ is small
enough.

On the other hand, by making the difference between (\ref{Vhn})
and (\ref{oVhnk}),
\begin{eqnarray}
\overline{V}_{h,n}'(s)-V_{h,n}'(s)&=& A_{h,0} (\overline{V}_{h,n}(s)-V_{h,n}(s)), \label{Vhn1} \\
\overline{V}_{h,n}(0)-V_{h,n}(0)&=&P_h u(t_n)- U_{h}^{n},
\nonumber
\end{eqnarray}
from what
$$
\overline{V}_{h,n}(k)-V_{h,n}(k)=e^{k A_{h,0}} (P_h
u(t_n)-U_{h}^{n}),$$ and then
$$
e_{h,n+1}=e^{k A_{h,0}} e_{h,n}+ \rho_{h,n+1}+k \bar{E}_h
(U_{h}^{n}, P_h u(t_n)),$$ where, for some constant $\bar{C}$,
\begin{eqnarray}
\|\bar{E}_h (U_{h}^{n}, P_h u(t_n))\|_h\le \bar{C} \|e_{h,n}\|_h.
\label{Eh}
\end{eqnarray}
From here,
\begin{eqnarray}
e_{h,n}&=& e^{n k A_{h,0}} e_{h,0}+\sum_{l=1}^n e^{(n-l)k A_{h,0}}
\rho_{h,l}+k \sum_{l=0}^{n-1} e^{(n-l-1)k A_{h,0}}
\bar{E}_h(U_{h}^{l}, P_h u(t_l)). \label{recurlt}
\end{eqnarray}
As $e_{h,0}= P_h u(0)- U_{h,0}=0$, by taking norms,
$$
\|e_{h,n}\|_h \le O(k+\varepsilon_h)+ k \bar{C} \sum_{l=0}^{n-1}
\|e_{h,l}\|_h,
$$
and using the discrete Gronwall lemma, the result follows.
\end{proof}

Another finer result is the following, which will be very useful
for non-Dirichlet boundary conditions.  When $\partial
\hat{v}_n(s)$ is numerically approximated through $U_{h,n}$,
according to the comments made in Remark \ref{r1}, we will denote
by $C_{h,n}^*(U_{h,n})$ to the vector which approximates $C_h
\partial f(t_n, u(t_n))$.

\begin{theorem}
\label{theoglobalerrorfull2} Let us assume the same
hypotheses of Theorem \ref{theolocalerrorfull}, that $u$ belongs to
$C^3([0,T],X)$ and $ Af(\cdot, u(\cdot)),f_t(\cdot, u(\cdot)),
f_u(\cdot, u(\cdot))$ to  $C^1([0,T],X)$, and also that there exist
constants $C$ and $C'$, independent of $h$, such that
\begin{eqnarray}
\|A_{h,0}^{-1} [C_{h,n}^{*}(U_{h}^n)-C_h \partial f(t_n, u(t_n))]\| &\le& C \|U_{h}^n -P_h u(t_n)\|, \label{cnum} \\
\|k A_{h,0} \sum_{r=1}^{n-1} e^{r k A_{h,0}}\| &\le& C' , \quad 0
\le n k \le T. \label{spp}
\end{eqnarray}
Then, the global error satisfies
\begin{eqnarray*}
e_{h,n}=O(k+ \eta_h+k\varepsilon_{h}),
\end{eqnarray*}
where  $\eta_h$ and $\varepsilon_{h}$ are those in
(\ref{consistency}).
\end{theorem}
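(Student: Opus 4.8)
The plan is to derive a one-step error recursion analogous to (\ref{recurlt}), but now accounting for the fact that the boundary contribution $C_h\partial f(t_n,u(t_n))$ is replaced by its numerical surrogate $C_{h,n}^{*}(U_{h}^{n})$, and then to estimate the resulting sum of propagated local errors by a summation-by-parts argument that exploits (\ref{spp}). First I would write $e_{h,n+1}=[P_hu(t_{n+1})-\overline{U}_{h}^{n+1}]+[\overline{U}_{h}^{n+1}-U_{h,n+1}]=\rho_{h,n+1}+[\Psi_k^{f,t_n}(\overline{V}_{h,n}(k))-\Psi_k^{f,t_n}(V_{h,n}(k))]$, exactly as in the proof of Theorem~\ref{theoglobalerrorfull}. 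The difference of the two arguments of $\Psi_k$ is now $\overline{V}_{h,n}(k)-V_{h,n}(k)=e^{kA_{h,0}}e_{h,n}+k^2\varphi_2(kA_{h,0})[C_{h,n}^{*}(U_{h}^{n})-C_h\partial f(t_n,u(t_n))]$, so a genuinely new term appears. The key algebraic observation is that $k^2\varphi_2(kA_{h,0})A_{h,0}=k\varphi_1(kA_{h,0})-kI$, which follows from (\ref{recurf}); hence this new term equals $(k\varphi_1(kA_{h,0})-kI)A_{h,0}^{-1}[C_{h,n}^{*}(U_{h}^{n})-C_h\partial f(t_n,u(t_n))]$, and by (\ref{cnum}) together with (H1) its norm is $O(k)\|e_{h,n}\|$. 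Combined with the Lipschitz estimate for $\Psi_k$ used in the previous theorem, this yields a recursion $e_{h,n+1}=e^{kA_{h,0}}e_{h,n}+\rho_{h,n+1}+R_{h,n}$ with $\|R_{h,n}\|\le Ck\|e_{h,n}\|$.

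Unrolling with $e_{h,0}=0$ gives $e_{h,n}=\sum_{l=1}^{n}e^{(n-l)kA_{h,0}}\rho_{h,l}+\sum_{l=0}^{n-1}e^{(n-l-1)kA_{h,0}}R_{h,l}$, and the second sum is bounded by $Ck\sum_l\|e_{h,l}\|$, to be absorbed at the end by the discrete Gronwall lemma. The whole difficulty is therefore to show that $\Sigma_n:=\sum_{l=1}^{n}e^{(n-l)kA_{h,0}}\rho_{h,l}=O(k+\eta_h+k\varepsilon_h)$; the naive bound $\sum_l\|\rho_{h,l}\|=n\,O(k\varepsilon_h+k^2)=O(\varepsilon_h+k)$ only reproduces the order reduction of Theorem~\ref{theoglobalerrorfull}, so the refinement is essential. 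Here I would write $\rho_{h,l}=A_{h,0}\sigma_{h,l}$ with $\sigma_{h,l}:=A_{h,0}^{-1}\rho_{h,l}$, which by Theorem~\ref{theolocalerrorfull} satisfies $\|\sigma_{h,l}\|=O(k\eta_h+k^2)$, and apply Abel summation: setting $C_l:=A_{h,0}\sum_{j=n-l}^{n-1}e^{jkA_{h,0}}$, one obtains $\Sigma_n=C_n\sigma_{h,n}+\sum_{l=1}^{n-1}C_l(\sigma_{h,l}-\sigma_{h,l+1})$.

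The bound (\ref{spp}) is precisely what makes both pieces summable. For $1\le l\le n-1$ the sum defining $C_l$ starts at $j=n-l\ge1$, so writing a partial sum as a difference of two sums of the type in (\ref{spp}) gives $\|kC_l\|\le 2C'$, that is $\|C_l\|\le 2C'/k$. In the boundary term I would split $C_n\sigma_{h,n}=A_{h,0}\sigma_{h,n}+A_{h,0}\big(\sum_{j=1}^{n-1}e^{jkA_{h,0}}\big)\sigma_{h,n}=\rho_{h,n}+\frac{1}{k}\big(kA_{h,0}\sum_{j=1}^{n-1}e^{jkA_{h,0}}\big)\sigma_{h,n}$; the first summand is $O(k\varepsilon_h+k^2)$ by Theorem~\ref{theolocalerrorfull} (this is exactly where the $k\varepsilon_h$ contribution enters), while the second is bounded by $\frac{C'}{k}\,O(k\eta_h+k^2)=O(\eta_h+k)$. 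For the telescoped sum I would use the extra regularity ($u\in C^3$ and $Af,f_t,f_u\in C^1$) to show that $\sigma_{h,l}$ depends smoothly on $t_l$ through the quantities appearing in Lemma~\ref{teor1} and in (\ref{consistency}), so that $\|\sigma_{h,l+1}-\sigma_{h,l}\|=O(k)\,O(k\eta_h+k^2)$; then $\sum_{l=1}^{n-1}\|C_l\|\,\|\sigma_{h,l}-\sigma_{h,l+1}\|\le\frac{2C'}{k}\cdot n\cdot O(k^2\eta_h+k^3)=O(\eta_h+k)$. Collecting these estimates gives $\|\Sigma_n\|=O(k+\eta_h+k\varepsilon_h)$ uniformly in $n$, and the discrete Gronwall lemma then yields $e_{h,n}=O(k+\eta_h+k\varepsilon_h)$.

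I expect the main obstacle to be the telescoped estimate for $\sigma_{h,l}-\sigma_{h,l+1}$: it requires extracting from the proof of Theorem~\ref{theolocalerrorfull} an explicit, $C^1$-in-time representation of $A_{h,0}^{-1}\rho_{h,l}$ whose consecutive differences carry an extra factor $k$ while preserving the $\eta_h$-scaling of (\ref{consistency}), which is exactly what the additional regularity hypotheses are designed to guarantee. Checking that the intermediate quantities ($u$, $Au$, $A^2u$, $f(\cdot,u(\cdot))$ and their time derivatives) remain in the subspace $Z$, so that (\ref{consistency}) may be applied to their differences, is the delicate bookkeeping step on which the whole gain over Theorem~\ref{theoglobalerrorfull} rests.
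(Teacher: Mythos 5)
Your proposal is correct and follows essentially the same route as the paper's own proof: the same error recursion, the same use of the identity $k^2\varphi_2(kA_{h,0})A_{h,0}=k(\varphi_1(kA_{h,0})-I)$ together with (\ref{cnum}) to absorb the approximated boundary term into an $O(k\|e_{h,n}\|)$ remainder, the same summation-by-parts exploiting (\ref{spp}) with the $O(k^3+k^2\eta_h)$ bound on consecutive differences of $A_{h,0}^{-1}\rho_{h,l}$, and the same discrete Gronwall conclusion. The only difference is cosmetic: you telescope on $\sigma_{h,l}=A_{h,0}^{-1}\rho_{h,l}$ with partial sums anchored at the opposite end from the paper's decomposition (\ref{decomp}), which changes nothing of substance.
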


\begin{proof}
The proof is very similar to that of Theorem
\ref{theoglobalerrorfull} with the difference that now
(\ref{Vhn1}) must be substituted by
\begin{eqnarray}
\overline{V}_{h,n}'(s)-V_{h,n}'(s)&=& A_{h,0}
(\overline{V}_{h,n}(s)-V_{h,n}(s))-s [C_h \partial f(t_n,
u(t_n))-C_{h,n}^{*}(U_{h,n})]. \nonumber
\end{eqnarray}
Therefore,
\begin{eqnarray}
\lefteqn{\overline{V}_{h,n}(k)-V_{h,n}(k)} \nonumber \\
&=& e^{k A_{h,0}}(P_h u(t_n)-U_n^n)-k^2 \varphi_2(k A_{h,0})[ C_h
\partial f(t_n,
u(t_n))-C_{h,n}^{*}(U_{h,n})] \nonumber \\
&=&e^{k A_{h,0}}(P_h u(t_n)-U_n^n)-k [\varphi_1(k
A_{h,0})-I]A_{h,0}^{-1} [C_h \partial f(t_n,
u(t_n))-C_{h,n}^{*}(U_{h,n})] \nonumber \\
&=& e^{k A_{h,0}}(P_h u(t_n)-U_n^n)+ O(k \|e_{h,n}\|), \nonumber
\end{eqnarray}
where the definition of $\varphi_1$ in (\ref{varphi}) has been
considered as well as (\ref{recurf}) and (\ref{cnum}). From this,
(\ref{recurlt}) still applies for some other function $\bar{E}_h$
also satisfying (\ref{Eh}). Then, we write one of the terms in
(\ref{recurlt})  as
\begin{eqnarray}
\lefteqn{\sum_{l=1}^n e^{k (n-l) A_{h,0}} \rho_{h,l}} \nonumber \\
&& =\big( \sum_{r=1}^{n-1}
e^{r k A_{h,0}}\big) \rho_{h,1}+\sum_{j=2}^{n-1} \big(
\sum_{r=1}^{j-1} e^{r k
A_{h,0}}\big)(\rho_{h,n-j+1}-\rho_{h,n-j})+\rho_{h,n}.
\label{decomp}
\end{eqnarray}
As the first term in this decomposition can be written as
\begin{eqnarray}
\big( \sum_{r=1}^{n-1} e^{r k A_{h,0}}\big) \rho_{h,1}=\big( k
A_{h,0} \sum_{r=1}^{n-1} e^{r k A_{h,0}}\big) \frac{1}{k}
A_{h,0}^{-1}\rho_{h,1}, \label{ftd}
\end{eqnarray}
applying (\ref{spp}) and Theorem \ref{theolocalerrorfull}, this
term is proved to be $O(k+\eta_h)$. As for the second term in
(\ref{decomp}), because of (A4) and (A5), the term in $k^2$ in
$A_{0}^{-1} \rho_{n+1}$ is differentiable with respect to time
$t_n$ and therefore, $A_0^{-1}(\rho_{n-j+1}-\rho_{n-j})=O(k^3)$.
When this is used in the local error for the full discretization,
$A_{h,0}^{-1}(\rho_{h,n-j+1}-\rho_{h,n-j})=O(k^3+k^2 \eta_h)$,
from what
\begin{eqnarray}
\sum_{j=2}^{n-1} \big( \sum_{r=1}^{j-1} e^{ r k A_{h,0}}\big)
(\rho_{h,n-j+1}-\rho_{h,n-j})&=&\sum_{j=2}^{n-1}
\big( k A_{h,0}\sum_{r=1}^{j-1} e^{r k A_{h,0}}\big)\frac{1}{k} A_{h,0}^{-1}(\rho_{h,n-j+1}-\rho_{h,n-j}) \nonumber \\
&=&O(k +\eta_h). \nonumber
\end{eqnarray}
Finally, using Theorem \ref{theoglobalerrorfull} for the last term
in (\ref{decomp}), it is clear that
$$\sum_{l=1}^n e^{k (n-l) A_{h,0}} \rho_{h,l}=O(k+\eta_h+k\varepsilon_h),$$
and the proof of the theorem follows by applying discrete Gronwall lemma to
$$\|e_{h,n}\|_h \le O(k+\eta_h+k\varepsilon_h)+k \bar{\bar{C}}\sum_{l=0}^{n-1} \|e_{h,l}\|_h .$$
\end{proof}
\begin{remark}
Bound (\ref{spp}) has been proved in \cite{HO} for analytic
semigroups, covering the case in which the linear operator in
(\ref{laibvp}) corresponds to that of a parabolic problem.
\end{remark}

\section{Full discretization: exponential Strang splitting}

\subsection{Final formula for the
implementation} Firstly, we apply the space discretization in
Section \ref{sd} to the evolutionary problem (\ref{strangsd-01})
and we obtain $V_{h,n}(s)\in \mathbb{C}^N$  as the solution of
\begin{eqnarray}
V'_{h,n}(s)&=&f(t_n+s, V_{h,n}(s)), \nonumber\\
V_{h,n}(0)&=&U_{h,n}. \label{Vhns}
\end{eqnarray}

 We will have to use the numerical integrator
$\Psi$ in order to approximate the solution of this problem.
Then, we define
\begin{eqnarray}
V_h^n=\Psi_{\frac{k}{2}}^{f, t_n} (U_h^n). \label{Vhnsf}
\end{eqnarray}

As a second step, discretizing (\ref{strangsd-02}), we consider
$W_{h,n}(s)\in \mathbb{C}^N$ as the solution of
\begin{eqnarray}
W'_{h,n}(s)&=& A_{h,0} W_{h,n}(s)+C_h \partial \hat{w}_n(s), \label{Whns}\\
W_{h,n}(0)&=&V_{h}^n. \nonumber
\end{eqnarray}
where $\hat{w}_n(s)$ is that in (\ref{wnsombrero}). By using the
variations of constants formula and the definition of the
functions $\varphi_1$ and $\varphi_2$ in (\ref{varphi}), we can
solve this problem exactly and we get
\begin{eqnarray}
W_{h,n}(k)&=&e^{k A_{h,0}} V_{h}^n+\int_0^k e^{(k-s) A_{h,0}}C_{h}  \partial \hat{w}_n(s) ds \nonumber \\
       &=&e^{k A_{h,0}} V_{h}^n+ k \varphi_1( k A_{h,0})C_h [g(t_n)+\frac{k}{2} \partial
       f(t_n,u(t_n))]
\nonumber \\&& + k^2 \varphi_2( k A_{h,0}) C_h [g'(t_n)-\partial
f(t_n,u(t_n)] \label{ffws}
\end{eqnarray}
Finally, from (\ref{strangsd-03}), we consider $Z_{h,n}(s)\in
\mathbb{C}^N$ as the solution of
\begin{eqnarray}
Z'_{h,n}(s)&=&f(t_n+\frac{k}{2}+s, Z_{h,n}(s)), \label{Zhns}\\
Z_{h,n}(0)&=&W_{h,n}(k), \nonumber
\end{eqnarray}
and, numerically integrating this problem, we
obtain
\begin{eqnarray}
U_h^{n+1}=\Psi_{\frac{k}{2}}^{f, t_n+\frac{k}{2}}
(W_{h,n}(k)). \label{Uhn_1s}
\end{eqnarray}
\begin{remark}
Similar comments to those in Remark \ref{nor} apply here.
Therefore, when $\partial u(t)=\partial A u(t)=0$, with the results which follow we will be implicitly proving that the standard
discretization with Strang method gives to rise to order $2$ for
the local error.
\end{remark}
\subsection{Local error}
In order to define the local error, we consider
$$
\overline{U}_{h,n+1}=\Psi_{\frac{k}{2}}^{f,
t_n+\frac{k}{2}}(\overline{W}_{h,n}(k)),
$$
where $\overline{W}_{h,n}(s)$ is the solution of
\begin{eqnarray}
\overline{W}_{h,n}'(s)&=& A_{h,0} \overline{W}_{h,n}(s)+ C_h \partial \hat{w}_n(s), \label{owhns} \\
\overline{W}_{h,n}(0)&=& \Psi_{\frac{k}{2}}^{P_h f, t_n}(P_h
u(t_n)), \nonumber
\end{eqnarray}
and $\hat{w}_n(s)$ is that in (\ref{wnsombrero}).

Then, for the local error $\rho_{h,n+1}=P_h u(t_{n+1})-
\overline{U}_{h,n+1}$, we have the following results:
\begin{theorem}
\label{theolocalerrorfulls}  Let us assume the same hypotheses of Theorem \ref{t5},
 also that $f\in C^2([0,T]\times X_h,X_h)$, that  $\Psi_k$ integrates
(\ref{Zhns}) with order $p\ge 2$ and (H1)-(H3). Then, when integrating
(\ref{laibvp}) with Strang method as described in (\ref{Vhnsf}),
(\ref{ffws}) and (\ref{Uhn_1s}), whenever $u$ and $f$ satisfy
\begin{eqnarray}
u(t_n), A u(t_n), A^2 u(t_n), f(t_n, u(t_n)), A f(t_n, u(t_n))
 \in Z, \label{regs2}
\end{eqnarray}
for the space $Z$  in (H2) and $\Psi_{\frac{k}{2}}^{f,t_n}$ leaves
this space invariant, the local error after full discretization
satisfies
\begin{eqnarray*}
\rho_{h, n+1}=O(k\varepsilon_{h}+ k^{2}), \quad
A_{h,0}^{-1}\rho_{h, n+1}=O(k\eta_{h}+k^2
\varepsilon_h+ k^{3}),
\end{eqnarray*}
where $\varepsilon_{h}$ and $\eta_h$ are those in
(\ref{consistency}).
\end{theorem}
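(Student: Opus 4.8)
The plan is to mirror the strategy that worked for the Lie--Trotter full discretization in Theorem \ref{theolocalerrorfull}, decomposing the local error through the exactly-semidiscrete quantity $\overline{u}_{n+1}$ of Section \ref{letss}. Concretely, I would write
\begin{eqnarray*}
\rho_{h,n+1}=P_h u(t_{n+1})-\overline{U}_{h,n+1}
=P_h\rho_{n+1}+\big(P_h\overline{u}_{n+1}-\overline{U}_{h,n+1}\big),
\end{eqnarray*}
where $\rho_{n+1}$ is the time-semidiscrete local error already controlled by Theorems \ref{t4} and \ref{t5}. The first piece contributes $P_h\rho_{n+1}=O(k^2)$ for the plain bound, and applying the uniformly bounded $A_{h,0}^{-1}$ together with Theorem \ref{t5} gives $A_{h,0}^{-1}P_h\rho_{n+1}=O(k^3)$ for the refined bound; here I must be slightly careful to relate $A_{h,0}^{-1}P_h$ to $P_h A_0^{-1}$ using the elliptic-projection identity (\ref{rh}) and the consistency estimates (\ref{consistency}), since it is $A_0^{-1}\rho_{n+1}$, not $\rho_{n+1}$ itself, that is $O(k^3)$.

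The heart of the matter is bounding the second piece, the discrepancy between the full discretization and the spatial projection of the semidiscretization. For this I would compare $\overline{W}_{h,n}(s)$ solving (\ref{owhns}) against $P_h\overline{w}_n(s)$, where $\overline{w}_n$ is the function from Lemma \ref{regstrang}. Taking the difference and using (\ref{rh}) produces a defect equation of exactly the same shape as in the Lie--Trotter case,
\begin{eqnarray*}
\overline{W}_{h,n}'(s)-P_h\overline{w}_n'(s)
=A_{h,0}\big(\overline{W}_{h,n}(s)-P_h\overline{w}_n(s)\big)
+A_{h,0}(P_h-R_h)\overline{w}_n(s),
\end{eqnarray*}
whose initial discrepancy is the consistency error of the numerical integrator $\Psi_{k/2}^{f,t_n}$ applied in $X$ versus $X_h$, controlled by the invariance of $Z$ under $\Psi_{k/2}^{f,t_n}$ and by (\ref{fandph}). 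Integrating via the variation-of-constants formula and invoking (H1), the regularity (\ref{regs2}), and the consistency bounds (\ref{consistency}) on each of the terms appearing in Lemma \ref{regstrang} (namely $A^2u(t_n)$, $Af(t_n,u(t_n))$, and the $\Psi$-defect), I expect $\overline{W}_{h,n}(k)-P_h\overline{w}_n(k)=O(k\varepsilon_h+k^2)$, and after applying $A_{h,0}^{-1}$, the sharpened $O(k\eta_h+k^2\varepsilon_h+k^3)$. The final step is to propagate this through the outer half-step integrator $\Psi_{k/2}^{f,t_n+k/2}$, which contributes an $O(k^{p+1})=O(k^3)$ term since $p\ge 2$ and is Lipschitz by (H3), so it transfers the $\overline{W}$-bound to the $\overline{U}$-level without degrading the order.

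The main obstacle I anticipate is the refined bound for $A_{h,0}^{-1}\rho_{h,n+1}$: one must track the interaction between $A_{h,0}^{-1}$ and the $\varphi_j(kA_{h,0})$-type integrals so that the two small parameters $\eta_h$ and $\varepsilon_h$ enter with the correct powers of $k$. In particular, the term $k^2\varphi_2(kA_{h,0})A_{h,0}(P_h-R_h)A^2u(t_n)$ must be rewritten, using the recurrence (\ref{recurf}) for $\varphi_2$, so that one factor of $A_{h,0}$ is absorbed against $A_{h,0}^{-1}$ and the remaining factor meets the $\varepsilon_h$-estimate in (\ref{consistency}), yielding the $k^2\varepsilon_h$ contribution, while the genuinely low-order part is the one governed by $\eta_h$. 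Keeping these bookkeeping distinctions straight—exactly the mechanism flagged parenthetically after (\ref{consistency}) whereby $\eta_h$ may beat $\varepsilon_h$—is the delicate point; the rest is a routine adaptation of the Lie--Trotter argument combined with the semidiscrete Lemma \ref{regstrang}.
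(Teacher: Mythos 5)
Your skeleton coincides with the paper's: the splitting $\rho_{h,n+1}=P_h\rho_{n+1}+\bigl(P_h\overline{u}_{n+1}-\overline{U}_{h,n+1}\bigr)$, the defect equation for $P_h\overline{w}_n(s)-\overline{W}_{h,n}(s)$ with forcing $A_{h,0}(R_h-P_h)\overline{w}_n(s)$, and the use of (\ref{fandph}) at the initial value; this does deliver the plain bound $\rho_{h,n+1}=O(k\varepsilon_h+k^2)$. The genuine gaps are both in the refined bound. First, your claim $A_{h,0}^{-1}P_h\rho_{n+1}=O(k^3)$ is false as stated, and the step you defer ("relate $A_{h,0}^{-1}P_h$ to $P_hA_0^{-1}$") is precisely the hard point of the proof. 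Setting $\omega_{n+1}=A_0^{-1}\rho_{n+1}$, one has $A\omega_{n+1}=\rho_{n+1}$, $\partial\omega_{n+1}=0$, so (\ref{rh}) gives $A_{h,0}R_h\omega_{n+1}=P_h\rho_{n+1}$, i.e.\ $A_{h,0}^{-1}P_h\rho_{n+1}=R_h\omega_{n+1}=P_h\omega_{n+1}+(R_h-P_h)\omega_{n+1}$. The term $P_h\omega_{n+1}$ is $O(k^3)$ by Theorem \ref{t5}, but $(R_h-P_h)\omega_{n+1}$ is only $O(\eta_h\|\omega_{n+1}\|_Z)$, and turning this into $O(k\eta_h)$ requires proving that $\omega_{n+1}\in Z$ with $\|\omega_{n+1}\|_Z=O(k)$ -- which is exactly what hypothesis (H2a), the regularity (\ref{regs2}), Lemma \ref{regstrang} and the assumed $Z$-invariance of $\Psi_{\frac{k}{2}}^{f,t_n}$ are in the theorem for. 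Your plan never establishes this $Z$-regularity of $\omega_{n+1}$, and without it the elliptic-projection swap produces an uncontrolled $\eta_h$-contribution rather than the $O(k^3+k\eta_h)$ you need.

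The second gap is the transfer through the outer half-step. Lipschitz continuity of $\Psi_{\frac{k}{2}}^{f,t_n+\frac{k}{2}}$ from (H3) transfers the plain bound, but it cannot transfer a bound on $A_{h,0}^{-1}(\cdot)$, because composing with a Lipschitz map does not commute with $A_{h,0}^{-1}$. One must expand, using $p\ge 2$ and $f\in C^2([0,T]\times X_h,X_h)$, writing $\Psi=\Psi_{\frac{k}{2}}^{f,t_n+\frac{k}{2}}$, $x=P_h\overline{w}_n(k)$, $y=\overline{W}_{h,n}(k)$ and suppressing the time argument,
\begin{equation*}
\Psi(x)-\Psi(y)=(x-y)+\tfrac{k}{2}\bigl[f(x)-f(y)\bigr]+\tfrac{k^2}{4}\bigl[(f_t+f_uf)(x)-(f_t+f_uf)(y)\bigr]+O(k^3),
\end{equation*}
and then apply $A_{h,0}^{-1}$ term by term: $A_{h,0}^{-1}(x-y)=\int_0^k e^{(k-s)A_{h,0}}(R_h-P_h)\overline{w}_n(s)\,ds=O(k\eta_h)$ by the $\eta_h$-half of (\ref{consistency}) (again needing $\overline{w}_n(s)\in Z$), whereas the $f$-difference terms only admit $\|A_{h,0}^{-1}\|\,L\,\tfrac{k}{2}\,O(k\varepsilon_h)=O(k^2\varepsilon_h)$. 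This -- and not any $\varphi_2$-recurrence manipulation -- is the true source of the $k^2\varepsilon_h$ term in the refined bound. Indeed, since the initial discrepancy $P_h\overline{w}_n(0)-\overline{W}_{h,n}(0)$ is \emph{exactly zero} by (\ref{fandph}) (not a nonzero ``consistency error'': an $O(k^2)$ initial defect would propagate as $A_{h,0}^{-1}e^{kA_{h,0}}$ times it, staying $O(k^2)$ and ruining the refined bound), no term of the form $k^2\varphi_2(kA_{h,0})A_{h,0}(P_h-R_h)A^2u(t_n)$ ever arises in this local-error argument; the recurrence (\ref{recurf}) bookkeeping you single out as the main obstacle belongs to the global-error proofs (Theorems \ref{theoglobalerrorfull2} and \ref{theoglobalerrorfulls}), not to this one.
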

\begin{proof}
Notice that
\begin{eqnarray}
\rho_{h,n+1}
&=&P_h u(t_n)-\overline{U}_{h,n+1}=P_h \rho_{n+1}+P_h
\overline{u}_{n+1}-\overline{U}_{h,n+1}
\nonumber \\
&=&P_h \rho_{n+1}+
P_h \Psi_{\frac{k}{2}}^{f,t_n+\frac{k}{2}}(\overline{w}_n(k))-
\Psi_{\frac{k}{2}}^{f,t_n+\frac{k}{2}}(\overline{W}_{h,n}(k)) \nonumber \\
&=& P_h \rho_{n+1}+P_h\overline{w}_n(k)-\overline{W}_{h,n}(k)+\frac{k}{2}f(t_n+\frac{k}{2}, P_h \overline{w}_n(k))
-\frac{k}{2}f(t_n+\frac{k}{2},\overline{W}_{h,n}(k)) \nonumber \\
&&+\frac{k^2}{4}[(f_t+f_u f)(t_n+\frac{k}{2},P_h \overline{w}_n(k))-(f_t+f_u f)(t_n+\frac{k}{2},\overline{W}_{h,n}(k))]+O(k^3), \label{rhohstrang}
\end{eqnarray}
where we have used (\ref{fandph}), the fact that $\Psi_{\frac{k}{2}}^{f,t_n}$ integrates (\ref{Zhns}) with order $p\ge 2$ and that $f\in C^2([0,T]\times X_h, X_h)$.
Now, from (\ref{owhns}) and the definition of $\overline{w}_n(s)$ in
Subsection \ref{letss},
\begin{eqnarray}
P_h \overline{w}_n'(s)-\overline{W}_{h,n}'(s)&=&P_h A \overline{w}_n(s)-A_{h,0} \overline{W}_{h,n}(s)-
C_h \partial \hat{w}_n(s) \nonumber \\
&=&A_{h,0}(P_h \overline{w}_n(s)-\overline{W}_{h,n}(s))+
A_{h,0}(R_h - P_h)\overline{w}_n(s), \nonumber
\end{eqnarray}
where the last term is $O(\varepsilon_h)$ according to Lemma
\ref{regstrang}, (\ref{regs2}) and  (H2). Moreover,
 again by (\ref{fandph}),
$$
P_h \overline{w}_n(0)-\overline{W}_{h,n}(0)= P_h
\Psi_{\frac{k}{2}}^{f,t_n}(u(t_n))-\Psi_{\frac{k}{2}}^{f,t_n}(P_h
u(t_n))=0.
$$
Then,
$$
P_h \overline{w}_n(k)-\overline{W}_{h,n}(k)= \int_0^k e^{(k-s)
A_{h,0}}A_{h,0}(R_h - P_h)\overline{w}_n(s)ds=O(k \varepsilon_h),
$$
and the first result follows by using Theorem \ref{t4}.

For the second result, applying $A_{h,0}^{-1}$ to
(\ref{rhohstrang})and using the second formula in
(H2b), it follows in the same way that
$$A_{h,0}^{-1} \rho_{h,n+1}= A_{h,0}^{-1} P_h \rho_{n+1}+ O(k \eta_h)+O(k^2 \varepsilon_h)+O(k^3).$$
Now, the key of the proof is that, if
$\omega_{n+1}=A_0^{-1} \rho_{n+1}$, $R_h \omega_{n+1}=A_{h,0}^{-1}
P_h \rho_{n+1}$. This comes from the fact that $\omega_{n+1}$ is
the solution of
$$ A \omega_{n+1}= \rho_{n+1}, \qquad \partial \omega_{n+1}=0,$$
from what
$
A_{h,0} R_h \omega_{n+1}= P_h \rho_{n+1}.$ Moreover, we will take
into account that
$$
R_h \omega_{n+1}= P_h \omega_{n+1}+(R_h-P_h)\omega_{n+1}=
O(k^3)+O(k \eta_h),
$$
where we have used Theorem \ref{t5} for the bound of the first
term. As for the second, we have used the definition of
$\rho_{n+1}$,  (\ref{regs2}),  Lemma \ref{regstrang}, hypothesis
(H2a) and the fact that $\Psi_{\frac{k}{2}}^{f,t_n}$ leaves $Z$
invariant. Because of this, $\omega_{n+1}\in Z$ and
$\|\omega_{n+1}\|_Z=O(k)$, and using  (H2b)  the result follows.
\end{proof}
\subsection{Global errors}
\label{globalstrang}
From the first result for the local error $\rho_{h,n+1}$ in Theorem \ref{theolocalerrorfulls}, a classical argument for the global error gives $e_{h,n}=O(k+\varepsilon_h)$. However, we also have this finer result, which will be
very useful for Dirichlet and non-Dirichlet boundary conditions
and parabolic problems:
\begin{theorem}
\label{theoglobalerrorfulls} Let us assume the same
hypotheses of Theorem \ref{theolocalerrorfulls}, that the bound
(\ref{spp}) is satisfied, and
\begin{enumerate}{}
\item[(i)]
 $u\in C^4([0,T], X)$, $f \in
C^3([0,T]\times X, X)$,$u(t)\in D(A^3)$ and
$f(t,u(t))\in D(A^2)$ for all $t\in [0,T]$ and $A^3 u, A^2
f(\cdot,u(\cdot)) \in C^1([0,T],X)$,\item[(ii)] $\partial
f(t_n,u(t_n))$ is calculated exactly or just approximated from data
according to Remark \ref{r2} and  (\ref{cnum}), \item[(iii)]
the term in $k^3$ for the local error when
integrating (\ref{strangsd-03}) with  $\Psi_k$ is differentiable with respect to
$t_n$,
\item[(iv)]$ u(\cdot), A u(\cdot), A^2 u(\cdot), f(\cdot,
u(\cdot)), A f(\cdot, u(\cdot))  \in C^1([0,T],Z)$.
\end{enumerate}
Then, the global error which turns up when
integrating (\ref{laibvp}) through (\ref{Vhnsf}), (\ref{ffws}) and
(\ref{Uhn_1s}), satisfies
\begin{eqnarray*}
e_{h,n}=O(k^2+ k \varepsilon_{h}+\eta_h),
\end{eqnarray*}
where  $\varepsilon_{h}$ and $\eta_h$ are those in
(\ref{consistency}).
\end{theorem}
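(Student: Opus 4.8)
The plan is to mirror the proof of Theorem \ref{theoglobalerrorfull2}, feeding in the sharper second-order local-error information of Theorem \ref{theolocalerrorfulls} in place of the first-order Lie-Trotter bounds. First I would establish the error recursion. Writing $e_{h,n+1} = [P_h u(t_{n+1}) - \overline{U}_{h,n+1}] + [\overline{U}_{h,n+1} - U_h^{n+1}] = \rho_{h,n+1} + [\overline{U}_{h,n+1} - U_h^{n+1}]$, I would propagate the difference of the two numerical solutions through the three substeps of Strang: the two nonlinear half-steps are controlled by the Lipschitz bound (H3) and a Gronwall argument exactly as in Theorem \ref{theoglobalerrorfull}, while the linear step contributes the propagator $e^{kA_{h,0}}$ together with the boundary-approximation terms of (\ref{ffws}). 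Using the recurrence (\ref{recurf}) (so that $k^2\varphi_2(kA_{h,0})A_{h,0}=k(\varphi_1(kA_{h,0})-I)$ and $kA_{h,0}\varphi_1(kA_{h,0})=e^{kA_{h,0}}-I$) together with hypothesis (ii) and the estimate (\ref{cnum}), each boundary-approximation defect is seen to be $O(k\|e_{h,n}\|)$. This yields the same recursion (\ref{recurlt}) as in the Lie-Trotter case, with some $\bar{E}_h$ satisfying (\ref{Eh}) and $e_{h,0}=0$.

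Next I would attack the term $\sum_{l=1}^n e^{k(n-l)A_{h,0}}\rho_{h,l}$ through the summation-by-parts decomposition (\ref{decomp}). For the first term I would write $\bigl(\sum_{r=1}^{n-1} e^{rkA_{h,0}}\bigr)\rho_{h,1} = \bigl(kA_{h,0}\sum_{r=1}^{n-1} e^{rkA_{h,0}}\bigr)\frac{1}{k}A_{h,0}^{-1}\rho_{h,1}$ and combine (\ref{spp}) with the second local-error bound $A_{h,0}^{-1}\rho_{h,1} = O(k\eta_h + k^2\varepsilon_h + k^3)$ of Theorem \ref{theolocalerrorfulls}, obtaining $O(\eta_h + k\varepsilon_h + k^2)$. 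For the last term $\rho_{h,n}$ the first bound of Theorem \ref{theolocalerrorfulls} already gives $O(k^2 + k\varepsilon_h)$, which is admissible.

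The crux is the middle telescoping sum $\sum_{j=2}^{n-1}\bigl(\sum_{r=1}^{j-1} e^{rkA_{h,0}}\bigr)(\rho_{h,n-j+1}-\rho_{h,n-j})$, which I would again rewrite by inserting $kA_{h,0}A_{h,0}^{-1}$ and applying (\ref{spp}); there are $O(T/k)$ summands, so I need each backward difference to satisfy $A_{h,0}^{-1}(\rho_{h,n-j+1}-\rho_{h,n-j}) = O(k^4 + k^2\eta_h + k^3\varepsilon_h)$, i.e. each of the three contributions to $A_{h,0}^{-1}\rho_{h,l}$ must gain one power of $k$ under differencing. For the temporal part I would invoke the identity from the proof of Theorem \ref{theolocalerrorfulls}, $A_{h,0}^{-1}P_h\rho_{l} = R_h\omega_l$ with $\omega_l = A_0^{-1}\rho_l$, and split $R_h\omega_l = P_h\omega_l + (R_h-P_h)\omega_l$: hypotheses (i) and (iii) make the $O(k^3)$ term of $\omega_l$ differentiable in $t_n$, so $A_0^{-1}(\rho_{n-j+1}-\rho_{n-j}) = O(k^4)$, while hypothesis (iv) gives $\omega\in C^1([0,T],Z)$ with $\|\omega_{n-j+1}-\omega_{n-j}\|_Z = O(k^2)$, whence $(R_h-P_h)(\omega_{n-j+1}-\omega_{n-j}) = O(k^2\eta_h)$ by (H2). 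Differencing the $O(k\eta_h)$ and $O(k^2\varepsilon_h)$ spatial contributions, again justified by the $C^1([0,T],Z)$ regularity in (iv), produces $O(k^2\eta_h)$ and $O(k^3\varepsilon_h)$. Dividing by $k$, applying (\ref{spp}) and summing yields $O(k^2 + \eta_h + k\varepsilon_h)$ for the middle sum.

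Collecting the three pieces gives $\sum_{l=1}^n e^{k(n-l)A_{h,0}}\rho_{h,l} = O(k^2 + k\varepsilon_h + \eta_h)$, so that $\|e_{h,n}\|_h \le O(k^2 + k\varepsilon_h + \eta_h) + k\tilde{C}\sum_{l=0}^{n-1}\|e_{h,l}\|_h$, and the discrete Gronwall lemma closes the estimate. I expect the main obstacle to be the bookkeeping of the differenced local error: specifically, verifying through the elliptic-projection identity and the $Z$-regularity of (iv) that \emph{every} one of the three contributions genuinely gains a factor $k$ under the backward difference, since a loss of differentiability in any single contribution would degrade the $O(k^2)$ back to $O(k)$.
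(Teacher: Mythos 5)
Your proposal is correct and follows essentially the same route as the paper: the same error recursion obtained by propagating the difference through the three substeps (with the boundary defects absorbed via (\ref{recurf}) and (\ref{cnum})), the same summation-by-parts decomposition (\ref{decomp}) combined with (\ref{spp}) and the two local-error bounds of Theorem \ref{theolocalerrorfulls}, and the same discrete Gronwall conclusion. If anything, your treatment of the telescoped middle term is more explicit than the paper's, which asserts $A_{h,0}^{-1}(\rho_{h,n-j+1}-\rho_{h,n-j})=O(k^4+k^2\eta_h)$ citing only hypothesis (i), whereas you correctly trace the roles of (iii) and of the $C^1([0,T],Z)$ regularity in (iv) through the elliptic-projection identity.
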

\begin{proof}
Notice that
\begin{eqnarray}
e_{h,n+1}&=& [P_h u(t_{n+1})-\overline{U}_{h,n+1}]+[\overline{U}_{h,n+1}-U_{h,n+1}] \nonumber \\
&=&\rho_{h,n+1}+\Psi_{\frac{k}{2}}^{f,t_n+\frac{k}{2}}(\overline{W}_{h,n}(k))
-\Psi_{\frac{k}{2}}^{f,t_n+\frac{k}{2}}(W_{h,n}(k)) \nonumber \\
&=&\rho_{h,n+1}+\overline{W}_{h,n}(k)-W_{h,n}(k)+k
E(\overline{W}_{h,n}(k), W_{h,n}(k),k)+O(k^{p+1}), \label{eglobs}
\end{eqnarray}
where, for some constant $C$,
$$\|E(\overline{W}_{h,n}(k), W_{h,n}(k),k)\|_h \le C \|\overline{W}_{h,n}(k)- W_{h,n}(k))\|_h.$$
Now, notice that
\begin{eqnarray}
\overline{W}_{h,n}'(s)-W_{h,n}'(s)&=& A_{h,0}(\bar{W}_{h,n}(s)-W_{h,n}(s))+(\frac{k}{2}-s)[C_h
\partial f(t,u(t_n))-C_{h,n}^{*}(U_{h,n})],
 \nonumber \\
\overline{W}_{h,n}(0)-W_{h,n}(0)&=&\Psi_{\frac{k}{2}}^{ f,t_n}(P_h
u(t_n))- \Psi_{\frac{k}{2}}^{f,t_n}(U_{h,n}). \nonumber
\end{eqnarray}
Then,
\begin{eqnarray}
\overline{W}_{h,n}(k)-W_{h,n}(k)&=& e^{k A_{h,0}}\big(
\Psi_{\frac{k}{2}}^{f,t_n}(P_h
u(t_n))- \Psi_{\frac{k}{2}}^{f,t_n}(U_{h,n})\big) \nonumber \\
&&+k^2 (\frac{1}{2} \varphi_1(k A_{h,0})-\varphi_2(k A_{h,0}))[C_h \partial f(t_n,u(t_n))-C_{h,n}^{*}(U_{h,n})] \nonumber \\
&=&e^{k A_{h,0}}\big(\overline{V}_{h,n}(\frac{k}{2})-V_{h,n}(\frac{k}{2})+O(k^{p+1})\big) \nonumber \\
&&+k (\frac{1}{2} (e^{k A_{h,0}}+I)-\varphi_1(k A_{h,0}))A_{h,0}^{-1}[C_h \partial f(t_n,u(t_n))-C_{h,n}^{*}(U_{h,n})] \nonumber \\
&=&e^{k A_{h,0}}\big(P_h u(t_n)-U_{h,n}+k E(P_h u(t_n), U_{h,n},k)+O(k^{p+1})\big) \nonumber \\
&&+
k (\frac{1}{2} (e^{k A_{h,0}}+I)-\varphi_1(k A_{h,0}))A_{h,0}^{-1}[C_h \partial f(t_n,u(t_n))-C_{h,n}^{*}(U_{h,n})] \nonumber \\
&=&e^{k A_{h,0}} e_{h,n}+k \bar{E}(P_h
u(t_n),U_{h,n},k)+O(k^{p+1}),
\label{Wkrecur}
\end{eqnarray}
where $\overline{V}_{h,n}(s)$ is the solution of (\ref{Vhns})
starting at $P_h u(t_n)$ and
where the definition of $\varphi_1$ and $\varphi_2$ (\ref{varphi})
has been considered as well as (\ref{recurf}), (\ref{cnum}) and the fact that $\Psi_k$ integrates (\ref{strangsd-03}) with order $p$.
Moreover, for some constant $\bar{C}$,
\begin{eqnarray}
\|\bar{E}(P_h u(t_n), U_{h,n},k)\|_h \le \bar{C} \|e_{h,n}\|_h.
\label{cotaEs}
\end{eqnarray}
Then, inserting this in (\ref{eglobs}),
\begin{eqnarray}
e_{h,n+1}=e^{k A_{h,0}} e_{h,n}+ k \bar{\bar{E}}(P_h
u(t_n),U_{h,n},k)+\rho_{h,n+1}+O(k^{p+1}), \nonumber
\end{eqnarray}
where, for some constant $\bar{\bar{C}}$,
$$
\|\bar{\bar{E}}(P_h u(t_n), U_{h,n},k)\|_h \le \bar{\bar{C}}
\|e_{h,n}\|_h.
$$
Inductively, this means that
\begin{eqnarray}
e_{h,n}&=& e^{nk A_{h,0}} e_{h,0}+\sum_{l=1}^n e^{(n-l)k
A_{h,0}}\big(\rho_{h,l}+O(k^{p+1})\big)
 \nonumber \\&&
+k \sum_{l=0}^{n-1} e^{(n-l-1) k A_{h,0}} \bar{\bar{E}}(P_h
u(t_n), U_h^l,k). \label{induction}
\end{eqnarray}
Then, we write one of the terms in
(\ref{induction}) as in (\ref{decomp}).
As the first term in this decomposition can be written as
(\ref{ftd}), applying (\ref{spp}) and Theorem
\ref{theolocalerrorfulls}, this term is proved to be
$O(k^2+\eta_h)$. As for the second term in (\ref{decomp}), hypothesis (i) makes that the
term in $k^3$ in $A_{0}^{-1} \rho_{n+1}$ is differentiable with
respect to time $t_n$ and therefore,
$A_0^{-1}(\rho_{n-j+1}-\rho_{n-j})=O(k^4)$. When this is used in
the local error for the full discretization,
$A_{h,0}^{-1}(\rho_{h,n-j+1}-\rho_{h,n-j})=O(k^4+k^2 \eta_h)$,
from what
\begin{eqnarray}
\sum_{j=2}^{n-1} \big( \sum_{r=1}^{j-1} e^{r k A_{h,0}}\big)
(\rho_{h,n-j+1}-\rho_{h,n-j})&=&\sum_{j=2}^{n-1}
\big( k A_{h,0}\sum_{r=1}^{j-1} e^{r k A_{h,0}}\big)\frac{1}{k} A_{h,0}^{-1}(\rho_{h,n-j+1}-\rho_{h,n-j}) \nonumber \\
&=&O(k^2 +\eta_h). \nonumber
\end{eqnarray}
Finally, using Theorem \ref{theolocalerrorfulls} for the last term
in (\ref{decomp}), it is clear that
$$\sum_{l=1}^n e^{i k (n-l) A_{h,0}} \rho_{h,l}=O(k^2+k\varepsilon_h+\eta_h).$$
By taking then norms and considering that $e_{h,0}=0$,  we have that
$$\|e_{h,n}\|_h \le O(k^2+k\varepsilon_h+\eta_h)+k \bar{\bar{C}}\sum_{l=0}^{n-1} \|e_{h,l}\|_h .$$
Applying a discrete Gronwall lemma, the result follows.
\end{proof}

\section{Numerical experiments}
\label{experimentos} In this section we will show, through some
examples, that order reduction is completely avoided with the
technique suggested here for Lie-Trotter and Strang exponential
splitting methods. For the sake of brevity, we have restricted here to finite differences for the space discretization, although collocation-type methods
also satisfy hypotheses of Section \ref{sd}.

\subsection{One-dimensional problem}
Firstly, we consider (\ref{laibvp}) where $X=C([0,1])$ and $A$ is
the second-order space derivative. Moreover, we take
\begin{eqnarray}
u_0(x)=e^{x^3}, \quad f(t,u)=u^2-e^{t+x^3}(9 x^4+ 6 x
+e^{t+x^3}-1), \label{p1}
\end{eqnarray}
and for the Dirichlet boundary conditions,
\begin{equation} \label{p1bc}
 g_0(t)=e^{t}, \quad g_1(t)=e^{t+1},
 \end{equation}
so that the exact solution of the problem is
$$ u(x,t)=e^{t+x^3}. $$
For the space discretization, we take $h=1/(N+1)$ and we consider
the nodes $x_j=jh$, $j=0, \ldots, N+1$. Then, the discrete space
is $\mathbb{C}^N$, where $N$ is the number of interior nodes, and
the second derivative is approximated by means of  the standard
second-order difference scheme. Moreover, $P_h$ is the projection
on the interior nodal values, $A_{h,0}=\mbox{tridiag}(1,-2,1)/h^2$
and $C_h g(t)=[g_0(t) ,0 ,\dots ,0 ,g_1(t)]^T/h^2$. Since
$\mu_\infty(A_{h,0})=0$, hypothesis (H1a) is satisfied for the
maximum norm; on the other hand, (H1b) can be verified directly
from the formula of $A_{h,0}^{-1}$. Moreover, in this case (H2b)
is true with $Z=C^4([0,1])$ and $\varepsilon_h, \eta_h=O(h^2)$,
and $f$ satisfies (H3).

Calculating $\varphi_j(k A_{h,0})C_h g(t)$ just corresponds to
making a linear combination of the first and last column of
$\varphi_j(k A_{h,0})$, which can be both calculated once and for
all at the very beginning for fixed stepsize $k$. As integrator
$\Psi_k$, we have considered the $4$th-order classical Runge-Kutta
method.

Considering the technique suggested in this paper, we have
obtained the results in Table 1 when integrating till time
$T=0.2$ with Lie-Trotter method and $h=10^{-3}$ and
those in Table 2 with Strang method and $h=2.5 \times 10^{-4}$. It
is clear that orders $2$ and $1$ are obtained for the local and
global errors respectively when integrating with Lie-Trotter and
order $2$ for the local and global errors when integrating with
Strang method, as assured by Theorems \ref{theolocalerrorfull},
\ref{theoglobalerrorfull}, \ref{theolocalerrorfulls} and
\ref{theoglobalerrorfulls} when the error in space is negligible.
(This seems to be the case because decreasing $h$ does not practically change the errors.)
\begin{table}[t]
\begin{center}
\begin{tabular}{|c|c|c|c|}
\hline & $k=5 \times 10^{-4}$ &  $k=2.5 \times 10^{-4}$ &  $k=1.25
\times 10^{-4}$   \\ \hline
$L^\infty$-local error & 1.5838e-04  & 4.2830e-05 &   1.1390e-05  \\ \hline
Order & & 1.8867  & 1.9108  \\
\hline $L^{\infty}$-global error  & 6.8139e-03  & 3.4035e-03 &  1.7016e-03
  \\ \hline
Order & & 1.0015 &  1.0001\\ \hline
\end{tabular}
\end{center}
\label{t1} \caption{Local and global error when integrating the
one-dimensional problem  corresponding to data (\ref{p1}) and
(\ref{p1bc}) with Dirichlet boundary conditions with the suggested
modification of Lie-Trotter method}
\end{table}
\begin{table}[t]
\begin{center}
\begin{tabular}{|c|c|c|c|c|}
\hline & $k=1 \times 10^{-3}$ &  $k=5 \times 10^{-4}$ &  $k=2.5
\times 10^{-4}$  \\ \hline
$L^{\infty}$-local error & 8.5559e-05 &  2.1777e-05  & 5.5000e-06
 \\ \hline Order & & 1.9741 &  1.9853  \\
\hline $L^{\infty}$-global error & 1.6140e-04 &  4.2882e-05 &  1.1235e-05
 \\ \hline Order & & 1.9122    &   1.9324
\\ \hline
\end{tabular}
\end{center}
\label{t2} \caption{Local and global error when integrating the
one-dimensional problem  corresponding to data (\ref{p1}) and
(\ref{p1bc}) with Dirichlet boundary conditions with the suggested
modification of Strang method}
\end{table}

Let us now consider the same problem as for the previous
experiment, but with a Neumann boundary condition at the right
boundary. More precisely, the boundary conditions are
\begin{eqnarray}\label{Neumann}
 u(0,t)&=& g_0(t), \\
 u_x (1,t) &=& g_1(t). \nonumber
\end{eqnarray}
with $g_0(t)=e^t$ and $g_1(t)=3 e^{1+t}$.

In this case, the values in the node $x=1$ are included and the
discrete space is $\mathbb{C}^{N+1}$, the matrix $A_{h,0}$ is the
same as in the previous experiment except for the last row which
is  $[0,\ldots, 0, 2,-2]/h^2$ now, and $C_h
\partial u(t) =[g_0(t)/h^2, 0 , \ldots, 0, 2g_1(t)/h]^T$. Again, hypotheses (H1)-(H3)
are satisfied with $Z=C^4([0,1])$.

We notice that now, as it is can be proved by using Taylor
expansions, all components of $A_{h,0}(R_h u-P_hu)$ are
$O(h^2\|u_{xxxx}\|_\infty)$ except for the last component which is
just $O(h\|u_{xxxx}\|_\infty)$. Therefore, $\varepsilon_h$ is
$O(h)$ and $\eta_h$ is, in principle, also $O(h)$  for every $u
\in C^4([0,1])$. However,
\begin{eqnarray}
\label{formulon} R_hu-P_hu&=&A_{h,0}^{-1}\left[\begin{array}{c}
O(h^2\|u_{xxxx}\|_\infty)\\ \vdots \\
O(h^2\|u_{xxxx}\|_\infty)\\0\end{array}\right]+
A_{h,0}^{-1}\left[\begin{array}{c}
0\\ \vdots \\
0\\O(h\|u_{xxxx}\|_\infty)\end{array}\right]\nonumber\\
&=& O(h^2)\|u_{xxxx}\|_\infty+ O(h^2)\|u_{xxxx}\|_\infty
A_{h,0}^{-1} \left[\begin{array}{c}
0\\ \vdots \\
0\\2/h\end{array}\right]
\end{eqnarray}
where, for the last equality, we have used (H1b). Taking now into
account that, when discretizing the problem $Av=0, v(0)=0,
v_x(1)=1$, it follows that $A_{h,0}R_hv+ [0,\ldots, 0,
2/h]^T=[0,\ldots, 0, 0]^T$, and it happens that
$$A_{h,0}^{-1}[0,\ldots, 0, 2/h]^T=-R_hv=-P_hv+O(h).$$
Therefore, the term above is bounded for small enough $h$ because
$v(x)=x$ and, from (\ref{formulon}), $\eta_h$ is in fact
$O(h^2)$.

The results which are obtained with the technique proposed in this
paper are shown in Table 3 for Lie-Trotter with $h=10^{-3}$ and in
Table 4 for Strang with $h=2.5 \times 10^{-4}$. In both cases, the
global errors are measured at time $T=0.2$. We see that, for
Lie-Trotter, orders 2 and 1 are observed  for the local and global
error respectively when $k$ decreases, while for Strang, order 2 is obtained for both
the local and global errors. These results corroborate Theorems
\ref{theolocalerrorfull}, \ref{theoglobalerrorfull2},
\ref{theolocalerrorfulls} and \ref{theoglobalerrorfulls} (We also
notice that hypotheses (\ref{cnum}) and (\ref{spp}) apply here
because of the use of Remark \ref{r1} to approximate $\partial
f(t_n,u(t_n))$ and the parabolic character of the equation).

\begin{table}[t]
\begin{center}
\begin{tabular}{|c|c|c|c|}
\hline & $k=5 \times 10^{-4}$ &  $k=2.5 \times 10^{-4}$ &  $k=1.25
\times 10^{-4}$  \\ \hline
$L^\infty$-local error & 2.0286e-04  & 5.1444e-05  & 1.2795e-05  \\ \hline
Order & & 1.9794  & 2.0074   \\
\hline $L^{\infty}$-global error & 3.9872e-02 &  1.9887e-02 &
9.9237e-03    \\ \hline Order & & 1.0036  & 1.0029  \\ \hline
\end{tabular}
\end{center}
\label{t2b} \caption{Local and global error when integrating the
one-dimensional problem  corresponding to data (\ref{p1}), with
Dirichlet and Neumann boundary conditions (\ref{Neumann}), with
the suggested modification of Lie-Trotter method}
\end{table}
\begin{table}[t]
\begin{center}
\begin{tabular}{|c|c|c|c|}
\hline & $k=1 \times 10^{-3}$ &  $k=5 \times 10^{-4}$ &  $k=2.5
\times 10^{-4}$  \\ \hline
$L^{\infty}$-local error & 2.6922e-05  & 5.0772e-06  & 9.1626e-07
\\ \hline Order & & 2.4067    &   2.4702     \\
\hline $L^{\infty}$-global error & 1.8549e-04 &  4.6220e-05 &
1.0814e-05  \\ \hline Order & & 2.0048  &     2.0957
 \\ \hline
\end{tabular}
\end{center}
\label{t2c} \caption{Local and global error when integrating the
one-dimensional problem  corresponding to data (\ref{p1}), with
Dirichlet and Neumann boundary conditions (\ref{Neumann}), with
the suggested modification of Strang method}
\end{table}

\subsection{Two-dimensional problem}We have also
considered the two-dimensional problem in the square
$\Omega=[0,1]\times [0,1]$ corresponding to the Laplacian as
operator $A$. Moreover, we have considered
\begin{eqnarray}
&&u_0(x) = e^{x^3+y^3}, \quad x \in  \Omega, \quad g(t,x)=e^{t+x^3+y^3}, \quad x \in \partial \Omega, \nonumber \\
&&f(t,u)= u^2 - e^{t+x^3+y^3}(9(x^4+y^4)+6(x+y)+e^{t+x^3+y^3}-1),  \quad x \in  \Omega,
 \label{p2}
\end{eqnarray}
which has $u(t,x)=e^{t+x^3+y^3}$ as exact solution.

Firstly, for the discretization of the Laplacian we have
considered the standard five-point formula \cite{S}. We notice
that, in this case, the discrete space is $\mathbb{C}^{N^2}$,
where $N$ is the number of interior nodes in each direction, and
$A_{h,0}$ is a tridiagonal block-matrix of dimension $N^2$.
Besides, the matrices in the diagonal are the same and are
tridiagonal and the matrices at the subdiagonal and superdiagonal
are the same and are diagonal. Notice also that $C_h g(t)$ would
just have $4N-4$ non-vanishing components, which is a number which
is negligible compared with $N^2$, the total number of interior
nodes. Again, (H1)-(H3) are satisfied for the infinity norm with
$\varepsilon_h$, $\eta_h$ being $O(h^2)$.

Tables 5 and 6 show the orders which are observed in time for $h=10^{-2}$
when integrating the problem till time $T=1$ with the suggested modifications of
Lie-Trotter and Strang method considering again $\Psi_k$ as the
fourth-order classical Runge-Kutta method. Again, we see that the
local and global order for Lie-Trotter are near $2$ and $1$
respectively and that the local and global order for Strang are
near $2$.

\begin{table}[t]
\begin{center}
\begin{tabular}{|c|c|c|c|}
\hline
 &  $k=5 \times 10^{-3}$ &  $k=2.5 \times 10^{-3}$ &   $k=1.25 \times 10^{-3}$  \\ \hline
$L^\infty$-local error &  6.1550e-02 &  1.9049e-02 &  5.7445e-03  \\ \hline
Order & &    1.6921 &  1.7295  \\
\hline $L^{\infty}$-global error   &  6.1666e-01 &  2.9307e-01  &
1.4341e-01  \\ \hline Order & &    1.0732 &  1.0311  \\ \hline
\end{tabular}
\end{center}
\label{t3} \caption{Local and global error when integrating the
two-dimensional problem  corresponding to data (\ref{p2}) with the
suggested modification of Lie-Trotter method}
\end{table}
\begin{table}[t]
\begin{center}
\begin{tabular}{|c|c|c|c|}
\hline & $k=1 \times 10^{-2}$ &  $k=5 \times 10^{-3}$ &  $k=2.5
\times 10^{-3}$   \\ \hline
$L^{\infty}$-local error & 5.4180e-02 &  1.5992e-02 &   4.5641e-03
 \\ \hline Order & &  1.7604 &  1.8090 \\
\hline $L^{\infty}$-global error & 3.0713e-01  & 7.7562e-02 &
2.1856e-02  \\ \hline Order & & 1.9854  & 1.8273  \\ \hline
\end{tabular}
\end{center}
\label{tab4} \caption{Local and global error when integrating the
two-dimensional problem  corresponding to data (\ref{p2}) with the
suggested modification of Strang method}
\end{table}

We also consider the use of a double splitting in order to obtain
a very efficient time integrator. For that, we have followed the
lines in \cite{ACR}, where the order reduction is avoided in the
case of a dimension splitting of the Laplacian operator. We use
similar ideas for the discretization of (\ref{vn}) in Lie-Trotter
method and that of (\ref{strangsd-02}) in Strang method.

More precisely, for the discretization of (\ref{vn}) we firstly
consider the problem
\begin{eqnarray}
 z_n'(s) &=& A_1 z_n (s), \label{zn} \\
 z_n(0)&=& u_n, \nonumber \\
 \partial_1 z_n(s) &=& \partial (u(t_n) + s A_1 u(t_n) ), \nonumber
\end{eqnarray}
with $A_1u= \partial_{xx}u$, and $\partial_1 u =\{u(0, y) = u(1,
y), y \in [0,1]\}$. Then,
\begin{eqnarray}
 r_n'(s) &=& A_2 r_n (s) ,\label{rn} \\
 r_n(0)&=& z_n(k), \nonumber \\
 \partial r_n(s) &=& \partial_2 (u(t_n) + k A_1 u(t_n) + s A_2 u(t_n)), \nonumber
\end{eqnarray}
where $A_2u= \partial_{yy}u$ and and $\partial_2 u=\{u(x, 0) =
u(x,1 ), x \in [0,1]\}$. Finally, we make $v_n(k)=r_n(k)$. We
apply now the spatial discretization of problems (\ref{zn}) and
(\ref{rn}) and we obtain
\begin{eqnarray*}
Z_{h,n}(k) &=& e^{k A_{h,0,1}} U_{h,n} + k \varphi_1( k A_{h,0,1}) C_h \partial_1 u(t_n) + k^2 \varphi_2 ( k A_{h,0,1}) C_h \partial_1 A_1 u(t_n),\\
R_{h,n}(k) &=& e^{k A_{h,0,2}} U_{h,n} + k \varphi_1( k A_{h,0,2}) C_h \partial_2 (u(t_n) + k A_1 u(t_n))\\
& &+ k^2 \varphi_2 ( k A_{h,0,2}) C_h \partial_2 A_2 u(t_n),
\end{eqnarray*}
where $A_{h,0,1}, A_{h,0,2}$, $\partial_1, \partial_2$ are the
matrices and the boundaries associated to the  spatial
discretization of $A_1$ and $A_2$ respectively.

On the other hand, in order to integrate (\ref{strangsd-02}) with
Strang method, we firstly consider the problem,
\begin{eqnarray}
 r_n'(s) &=& A_1 r_n(s) ,\label{Strangrn} \\
 r_n(0)&=& v_n \left(\frac{k}{2}\right), \nonumber \\
 \partial_1 r_n (s) &=& \partial_1 \left( u(t_n) + \frac{k}{2} f(t_n, u(t_n)) + s A_1 u(t_n) \right), \nonumber
\end{eqnarray}
then,
\begin{eqnarray}
 \phi_n'(s) &=& A_2 \phi_n(s) \label{Strangphin}, \\
 \phi_n(0)&=& r_n \left(\frac{k}{2}\right) ,\nonumber \\
 \partial_2 \phi_n (s) &=& \partial_2 \left( u(t_n) + \frac{k}{2} f(t_n, u(t_n)) + \frac{k}{2} A_1 u(t_n) + s A_2 u(t_n) \right), \nonumber
\end{eqnarray}
and finally
\begin{eqnarray}
 \mu_n'(s) &=& A_1 \mu_n(s), \label{Strangmun} \\
 \mu_n(0)&=& \phi_n (k), \nonumber \\
 \partial_1 \mu_n (s) &=& \partial_1 \left( u(t_n) + \frac{k}{2} f(t_n, u(t_n)) + \frac{k}{2} A_1 u(t_n) + k A_2 u(t_n) + s A_1 u(t_n) \right), \nonumber
\end{eqnarray}
and we make $w_n(k) = \mu_n (\frac{k}{2})$. Considering now the
spatial discretization of the previous three problems, we obtain
\begin{eqnarray*}
 R_{h,n}\left(\frac{k}{2}\right) &=& e^{\frac{k}{2}A_{h,0,1}} V_{h,n} + \frac{k}{2} \varphi_1 \left(\frac{k}{2} A_{h,0,1}\right) C_h \partial_1 \left(u(t_n) + \frac{k}{2} f(t_n,u(t_n)) \right) \\
 & &+ \frac{k^2}{4} \varphi_2\left( \frac{k}{2}A_{h,0,1}\right) C_h \partial_1 A_1 u(t_n), \\
  \Phi_{h,n}(k) &=& e^{kA_{h,0,2}} R_{h,n}\left( \frac{k}{2} \right) + k \varphi_1 (k A_{h,0,2}) C_h \partial_2 \left(u(t_n) + \frac{k}{2} f(t_n,u(t_n)) + \frac{k}{2} A_1 u(t_n) \right) \\
 & &+ k^2 \varphi_2( kA_{h,0,2}) C_h \partial_2 A_2 u(t_n),\\
 \mu_{h,n}\left(\frac{k}{2}\right) &=& e^{\frac{k}{2}A_{h,0,1}} \Phi_{h,n} (k) \\
 & &+ \frac{k}{2} \varphi_1 \left(\frac{k}{2} A_{h,0,1}\right) C_h \partial_1 \left(u(t_n) + \frac{k}{2} f(t_n,u(t_n)) + \frac{k}{2} A_1 u(t_n)  + k A_2 u(t_n)\right) \\
 & &+ \frac{k^2}{4} \varphi_2\left( \frac{k}{2}A_{h,0,1}\right) C_h \partial_1 A_1 u(t_n),\\
 W_{h,n} &=& \mu_{h,n} \left(\frac{k}{2}\right) .
\end{eqnarray*}

We have considered the standard second order symmetric finite
difference scheme for the discretization of $A_1$ and $A_2$.
Notice that this procedure will be especially efficient since now
the matrices $A_{h,0,j}$ ($j=1,2$) for space discretization in one
or another direction are block-diagonal matrices and, moreover,
the blocks are tridiagonal. Therefore, multiplying $e^{k
A_{h,0,j}}$ or $\varphi_l( k A_{h,0,j})$ times a vector of size
$N^2$ just corresponds to $N$ products of a matrix of dimension $N
\times N$ times a vector of size $N$. Moreover, many components of
$C_h g(t)$ will vanish.

Although we do not make the analysis for this double splitting, it
is natural to suspect that order reduction is also being
completely avoided. Tables 7 and 8 corroborate that behavior with
$h=10^{-2}$.
\begin{table}[t]
\begin{center}
\begin{tabular}{|c|c|c|c|}
\hline
 &  $k=5 \times 10^{-3}$ &  $k=2.5 \times 10^{-3}$ &   $k=1.25 \times 10^{-3}$  \\ \hline
$L^\infty$-local error & 6.9693e-02 &  1.9980e-02 &  5.8275e-03 \\
\hline
Order & &    1.8025 &  1.7776  \\
\hline $L^{\infty}$-global error   &  6.1373e-01 &  2.9240e-01 &
1.4325e-01  \\ \hline Order & &    1.0697  & 1.0294  \\
\hline
\end{tabular}
\end{center}
\label{tab5} \caption{Local and global error when integrating the
two-dimensional problem  corresponding to data (\ref{p2}) with the
double splitting of Lie-Trotter method and second-order difference
scheme in space}
\end{table}
\begin{table}[t]
\begin{center}
\begin{tabular}{|c|c|c|c|}
\hline
 &  $k=10^{-2}$ &  $k=5 \times 10^{-3}$ &   $k=2.5 \times 10^{-3}$ \\ \hline
$L^\infty$-local error & 6.5879e-02 &  1.8698e-02 &  5.2568e-03  \\ \hline Order & &    1.8169 &  1.8307  \\
\hline $L^{\infty}$-global error   &  3.4096e-01 &  8.7881e-02 &
2.3635e-02  \\ \hline Order & &    1.9560  & 1.8946   \\
\hline
\end{tabular}
\end{center}
\label{t6} \caption{Local and global error when integrating the
two-dimensional problem  corresponding to data (\ref{p2}) with the
double splitting of Strang method and second-order difference
scheme in space}

\end{table}

\section*{Acknowledgements}
This research has been supported by Ministerio de Ciencia e
Innovaci\'on project MTM2015-66837-P of Ministerio de
Econom\'{\i}a y Competitividad.

\end{document}